\renewcommand{\leq}{\leqslant}
\renewcommand{\geq}{\geqslant}
\renewcommand{\P}{\mathbb{P}}
\renewcommand{\phi}{\varphi}
\renewcommand{\epsilon}{\varepsilon}
\newcommand{\R}{\mathbb{R}}
\newcommand{\N}{\mathbb{N}}
\newcommand{\ind}{\mathds{1}}
\newcommand{\E}{\mathbb{E}}
\newcommand{\kl}{\mathrm{kl}}
\renewcommand{\P}{\mathbb{P}}
\renewcommand{\geq}{\geqslant}
\renewcommand{\leq}{\leqslant}
\renewcommand{\phi}{\varphi}
\renewcommand{\d}{\mathrm{d}}
\newcommand{\cU}{\mathcal{U}}
\newcommand{\cS}{\mathcal{S}}
\newcommand{\F}{\mathcal{F}}
\newcommand{\cE}{\mathcal{E}}
\newcommand{\cN}{\mathcal{N}}
\newcommand{\Ng}{\mathcal{N}}
\newcommand{\M}{\mathcal{M}}
\newcommand{\I}{\mathcal{I}}
\newcommand{\alt}{\mathcal{A}\textit{lt}}
\let\ln\relax
\DeclareMathOperator{\ln}{log}
\newcommand{\hmu}{\widehat{\mu}}
\newcommand{\hlambda}{\widehat{\lambda}}
\newcommand{\lambdaStar}{\lambda^*}
\newcommand{\ha}{\widehat{a}}
\newcommand{\tomega}{\widetilde{\omega}}
\newcommand{\tlambda}{\widetilde{\lambda}}
\newcommand{\astar}{a^{\star}}
\DeclareMathOperator*{\argmax}{arg\,max}
\DeclareMathOperator*{\argmin}{arg\,min}
\DeclareMathOperator*{\loglog}{\log\!\log}
\DeclareMathOperator*{\conv}{Conv}
\begin{document}

\title{Thresholding Bandit for Dose-ranging:\ \\ The Impact of Monotonicity}

\author{\name Aur{\'e}lien Garivier \email aurelien.garivier@math.univ-toulouse.fr \\
       \addr Institut de Math\'ematiques de Toulouse; UMR5219\\
       Universit\'e de Toulouse; CNRS\\
	   UPS IMT, F-31062 Toulouse Cedex 9, France
       \AND
     \name Pierre M{\'e}nard \email pierre.menard@math.univ-toulouse.fr \\
       \addr Institut de Math\'ematiques de Toulouse; UMR5219\\
       Universit\'e de Toulouse; CNRS\\
	   UPS IMT, F-31062 Toulouse Cedex 9, France
	   \AND
	    \name Laurent Rossi \email laurent.rossi@math.univ-toulouse.fr \\
       \addr Institut de Math\'ematiques de Toulouse; UMR5219\\
       Universit\'e de Toulouse; CNRS\\
	   UPS IMT, F-31062 Toulouse Cedex 9, France
       }


\maketitle

\begin{abstract}
We analyze the sample complexity of sequentially identifying the distribution whose expectation is the closest to some given threshold, with and without the assumption that the mean values of the distributions are increasing. In each case, we provide a lower bound valid for any risk $\delta$ and any $\delta$-correct algorithm; in addition, we propose an algorithm whose sample complexity is of the same order of magnitude. 
This work is motivated by phase 1 clinical trials, a practically important setting where the arm means are increasing by nature, and where no satisfactory solution is available so far.
\end{abstract}

\begin{keywords}
thresholding bandits, multi-armed bandits, best arm identification, unimodal regression, isotonic regression.
\end{keywords}




\section{Introduction}
The phase 1 of clinical trials is devoted to the testing of a drug on healthy volunteers for \emph{dose-ranging}. The first goal is to determine the maximum tolerable dose (MTD), that is the maximum amount of the drug that can be given to a person before adverse effects become intolerable or dangerous. A target tolerance level is chosen (typically $33\%$), and the trials aim at identifying quickly which is the dose entailing the toxicity coming closest to this level. 
Classical approaches are based on dose escalation, and the most well-known is the "traditional 3+3 Design": see~\cite{LeTourneau09escalation,Genovese863} for and references therein for an introduction.

We propose in this chapter a complexity analysis for a simple model of phase 1 trials, which captures the essence of this problem.
We assume that the possible doses are $x_1<\ldots<x_K$, for some positive integer $K$. The patients are treated in sequential order, and identified by their rank. When the patient number $t$ is assigned a dose $x_k$, we observe a measure of toxicity $X_{k,t}$ which is assumed to be an independent random variable. Its distribution $\nu_k$ characterizes the toxicity level of dose $x_k$. To avoid obfuscating technicalities, we treat here the case of Gaussian laws with known variance and unknown mean, but some results can easily be extended to other one-parameter exponential families such as Bernoulli distributions. 
The goal of the experiment is to identify as soon as possible the dose $x_k$ which has the  toxicity level $\mu_k$ closest to the target admissibility level $S$, with a controlled risk $\delta$ to make an error. 

\paragraph{Content.} 
This setting is an instance of the \emph{thresholding bandit problem}: we refer to~\cite{LocatelliGC16} for an important contribution and a nice introduction in the fixed budget setting. Contrary to previous work, we focus here on identifying the \emph{exact sample complexity} of the problem: we want to understand precisely (with the correct multiplicative constant) how many samples are necessary to take a decision at risk $\delta$. We prove a lower bound which holds for all possible algorithms, and we propose an algorithm which matches this bound asymptotically when the risk $\delta$ tends to~$0$.

But the classical thresholding bandit problem does not catch a key feature of phase 1 clinical trials: the fact that the toxicity is \emph{known in hindsight} to be \emph{increasing} with the assigned dose. In other words, we investigate how many samples can be spared by algorithms using the fact that $\mu_1<\mu_2<\ldots<\mu_K$. Under this assumption, we prove another lower bound on the sample complexity, and provide an algorithm matching it. The sample complexity does not take a simple form (like a sum of inverse squares), but identifying it \emph{exactly} is essential even in practice, since it is the \emph{only way known so far} to construct an algorithm which reaches the lower bound. 

We are thus able to quantify, for each problem, how many samples can be spared when means are sorted, at the cost of a slight increase in the computation cost of the algorithm.

\paragraph{Connections to the State of the Art.}
Phase 1 clinical trials have been an intense field of research in the statistical community (see~\citet{LeTourneau09escalation} and references therein), but not considered as a sequential decision problem using the tools of the bandit literature. The important progress made in the recent years in the understanding of bandit models has made it possible to shed a new light on this issue, and to suggest very innovative solutions. The closest contribution are the works of~\cite{LocatelliGC16} and~\cite{chen14comb}, which provides a general framework for combinatorial pure exploration bandit problems. This work  tackles the more specific issue of phase 1 trials. It aims at providing strong foundations for such solutions: it does not yet tackle all the ethical and practical constraints. Observe that it might also be relevant to look for the highest dose with toxicity \emph{below} the target level: we discuss this variant in Section~\ref{sec:below}; however, it seems that practitioners do not consider this alternative goal in priority.

From a technical point the view, the approach followed here extends the theory of Best-Arm Identification initiated by~\cite{KaCaGa16} to a different setting. Building on the mathematical tools of that paper, we analyze the \emph{characteristic time} of a thresholding bandit problem with and without the assumptions that the means are increasing. Computing the complexity with such a structural constraint on the means is a challenging task that had never been done before. It induces significant difficulties in the theory, but (by using isotonic regression) we are still able to provide a simple algorithm for computing the complexity term, which is of fundamental importance in the implementation of the algorithm. The computational complexity of the resulting algorithm is discussed in Section~\ref{sec:pratical_implementation}.

\paragraph{Organization.} These lower bounds are presented in Section~\ref{sec:lower_bounds_threshold}. We compare the complexities of the non-monotonic case versus the increasing case. This comparison is particularly simple and enlightening when $K=2$, a setting often referred to as \emph{A/B testing}. We discuss this case in Section~\ref{sec:two_arms}, 
which furnishes a gentle introduction to the general case. 
We present in Section~\ref{sec:asymptotically_optimal_algorithm} an algorithm and show that it is asymptotically optimal when the risk $\delta$ goes to $0$. The implementation of this algorithm requires, in the increasing case, an involved optimization which relies on constraint sub-gradient ascent and \emph{unimodal regression}: this is detailed in Section~\ref{sec:pratical_implementation}. 
Section~\ref{sec:numerical_exp} shows the results of some numerical experiments for different strategies with high level of risk that complement the theoretical results. 
Section~\ref{sec:below} discusses the interesting, but simpler variant of the problem where the goal is to identify the arm with mean closest, but also \emph{below} the threshold.
Section~\ref{sec:conclusion} summarizes further possible developments, and precedes most of the technical proofs which are given in appendix.

\subsection{Notation and Setting}
For $K\geq 2$, we consider a Gaussian bandit model $\big( \Ng(\mu_1,1),\ldots,\Ng(\mu_K,1)\big)$, which we unambiguously refer to by the vector of means $\bm \mu=\big(\mu_1,\ldots,\mu_K\big)$. Let $\P_{\bm \mu}$ and $\E_{\bm \mu}$ be respectively the probability and the expectation under the Gaussian bandit model $\bm \mu$. A threshold $S\in\R$ is given, and we denote by $a^*_{\bm \mu}\in\argmin_{1\leq a\leq K}|\mu_a-S|$ any optimal arm.

Let $\M$ be the set of Gaussian bandit models with an unique optimal arm and $\I=\{ \bm \mu\in \M:\, \mu_1<...<\mu_K\}$ be the subset of models with increasing means.

\paragraph{Definition of a $\delta$-correct algorithm.} 
A risk level $\delta\in(0,1)$ is fixed. 
At each step $t\in\N^*$ an agent chooses an arm $A_t \in \{1,\ldots,K\}$ and receives a conditionally independent reward $Y_t\sim\cN(\mu_{A_t},1)$. Let $\F_{t}=\sigma (A_1,Y_1,\ldots, A_t,Y_t)$ be the information available to the player at step $t$. Her goal is to identify the optimal arm $a^*_{\bm \mu}$ while minimizing the number of draws $\tau$. To this aim, the agent needs:
\begin{itemize}[noitemsep,nolistsep]
\item a \textbf{sampling rule} $(A_t)_{t\geq 1}$, where $A_t$ is $\F_{t-1}$-measurable,
\item a \textbf{stopping rule} $\tau_\delta$, which is a stopping time with respect to the filtration $(\F_t)_{t\geq 1}$,
\item a $\F_{\tau_\delta}$-measurable \textbf{decision rule} $\ha_{\tau_\delta}$.
\end{itemize}
For any setting $\cS\in \{\M,\I\}$ (the non-monotonic or the increasing case), an algorithm is said to be  $\delta$-correct on $\cS$ if for all $\bm\mu \in \cS$ it holds that $\P_{\bm \mu}(\tau_\delta<+\infty)=1$ and  $\P_{\bm \mu}(\ha_{\tau_\delta}\neq a_{\bm \mu}^*)\leq \delta$.

\section{Lower Bounds}
\label{sec:lower_bounds_threshold}

For $\cS\in\{\M,\I\}$, we define the set of \emph{alternative bandit problems} of the bandit problem $\bm \mu\in \M$ by 
\begin{equation}
\alt(\bm \mu,\cS):=\{ \bm \lambda \in \cS : a_{\bm \lambda}^*\neq a_{\bm \mu}^* \} \,,
\label{eq:def_alt}
\end{equation}
and the probability simplex of dimension $K-1$ by $\Sigma_K$.  
The first result of this chapter is a lower bound on the sample complexity of the thresholding bandit problem, which we show in the sequel to be tight when $\delta$ is small enough.
\begin{theorem}
Let $\cS\in \{\M,\I\}$ and $\delta \in (0,1/2]$. For all $\delta$-correct algorithm on $\cS$ and for all bandit models $\bm \mu \in \cS$,
\begin{equation}
    \E_{\bm \mu}[\tau_\delta]\geq T_{\cS}^*(\bm \mu)\,\kl(\delta,1-\delta)\,,
\label{eq:general_lb_non_asympt}
\end{equation}
where the characteristic time $T_{\cS}^*(\bm \mu)$ is given by
\begin{equation}
    T_{\cS}^*(\bm \mu)^{-1}=\sup_{\omega\in\Sigma_K}\inf_{\bm \lambda \in \alt(\bm \mu, \cS)} \sum_{a=1}^{K} \omega_a \frac{(\mu_a - \lambda_a)^2}{2}\,.
\label{eq:general_characteristic_time}
\end{equation}
In particular, this implies that
\begin{equation*}
    \liminf\limits_{\delta\rightarrow0} \frac{\E_{\bm \mu}[\tau_\delta]}{\log(1/\delta)}\geq T_{\cS}^*(\bm \mu)\,.
\label{eq:general_lb_asympt}
\end{equation*}
\label{th:general_lb}
\end{theorem}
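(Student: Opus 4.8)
The plan is to follow the standard change-of-measure argument for sample-complexity lower bounds in pure exploration, specialized to the thresholding structure. The engine is the transportation (data-processing) inequality of~\cite{KaCaGa16}: for any $\F_{\tau_\delta}$-measurable event $\cE$ and any two models $\bm\mu,\bm\lambda\in\cS$,
\[
\sum_{a=1}^K \E_{\bm\mu}\bigl[N_a(\tau_\delta)\bigr]\, \KL\bigl(\cN(\mu_a,1),\cN(\lambda_a,1)\bigr) \;\geq\; \kl\bigl(\P_{\bm\mu}(\cE),\,\P_{\bm\lambda}(\cE)\bigr),
\]
where $N_a(\tau_\delta)$ denotes the number of pulls of arm $a$ before stopping. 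For unit-variance Gaussians the per-sample divergence is simply $\KL(\cN(\mu_a,1),\cN(\lambda_a,1))=(\mu_a-\lambda_a)^2/2$, which is exactly the summand appearing in~\eqref{eq:general_characteristic_time}.

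I would instantiate this inequality with the event $\cE=\{\ha_{\tau_\delta}=a^*_{\bm\mu}\}$ and an arbitrary alternative $\bm\lambda\in\alt(\bm\mu,\cS)$. By $\delta$-correctness on $\cS$ applied to $\bm\mu$ we get $\P_{\bm\mu}(\cE)\geq 1-\delta$; and since $a^*_{\bm\lambda}\neq a^*_{\bm\mu}$, the event $\cE$ forces a wrong decision under $\bm\lambda$, so $\delta$-correctness applied to $\bm\lambda$ gives $\P_{\bm\lambda}(\cE)\leq\P_{\bm\lambda}(\ha_{\tau_\delta}\neq a^*_{\bm\lambda})\leq\delta$. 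Using the hypothesis $\delta\leq 1/2$ together with the monotonicity of the binary relative entropy (increasing in its first argument and decreasing in its second when the first exceeds the second) and its symmetry $\kl(1-\delta,\delta)=\kl(\delta,1-\delta)$, the right-hand side is bounded below by $\kl(\delta,1-\delta)$, yielding $\sum_a \E_{\bm\mu}[N_a(\tau_\delta)]\,(\mu_a-\lambda_a)^2/2 \geq \kl(\delta,1-\delta)$.

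The last step converts pull counts into a proportion vector. Assuming $\E_{\bm\mu}[\tau_\delta]<\infty$ (otherwise the bound is vacuous), set $\omega_a:=\E_{\bm\mu}[N_a(\tau_\delta)]/\E_{\bm\mu}[\tau_\delta]$, which lies in $\Sigma_K$ because $\sum_a N_a(\tau_\delta)=\tau_\delta$. Dividing through by $\E_{\bm\mu}[\tau_\delta]$ gives $\sum_a \omega_a(\mu_a-\lambda_a)^2/2 \geq \kl(\delta,1-\delta)/\E_{\bm\mu}[\tau_\delta]$ for every $\bm\lambda\in\alt(\bm\mu,\cS)$; taking the infimum over $\bm\lambda$ and then bounding below by the supremum over $\omega\in\Sigma_K$ (our particular $\omega$ being admissible) identifies the right-hand factor with $T^*_{\cS}(\bm\mu)^{-1}$, and rearranging produces~\eqref{eq:general_lb_non_asympt}. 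The asymptotic statement follows since $\kl(\delta,1-\delta)=(1-2\delta)\log\frac{1-\delta}{\delta}\sim\log(1/\delta)$ as $\delta\to 0$.

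The main obstacle is not the algebra but the correct invocation of the transportation inequality: one must verify that the sampling rule is $\F_{t-1}$-measurable and $\tau_\delta$ a genuine stopping time, so that a Wald-type identity legitimately decomposes the trajectory log-likelihood ratio as $\sum_a N_a(\tau_\delta)\,\KL(\cdot,\cdot)$. Notably, the argument is entirely uniform in $\cS\in\{\M,\I\}$: the setting enters \emph{only} through the alternative set $\alt(\bm\mu,\cS)$, so the increasing constraint restricts the feasible $\bm\lambda$ without altering a single line of the proof. The genuine difficulty specific to this paper, namely the explicit evaluation of $T^*_{\I}(\bm\mu)$ under the isotonic constraint, resides in the matching upper bound and its implementation rather than in this lower bound.
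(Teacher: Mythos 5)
Your proof is correct and follows exactly the route the paper intends: the paper does not spell out the argument but states that Theorem~\ref{th:general_lb} "is proved along the same lines" as Theorem~1 of Garivier and Kaufmann (2016), which is precisely the transportation-inequality change of measure, the bound $\kl(\P_{\bm\mu}(\cE),\P_{\bm\lambda}(\cE))\geq\kl(\delta,1-\delta)$ from $\delta$-correctness, and the normalization of the pull counts into a point of $\Sigma_K$ that you carry out. Your observation that the setting $\cS$ enters only through the alternative set $\alt(\bm\mu,\cS)$ is exactly why the paper can state the result uniformly for $\cS\in\{\M,\I\}$; nothing is missing.
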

This result is a generalization of Theorem~1 of \citet{garivier2016optimal}: the classical Best Arm Identification problem is a particular case of our non-monotonic setting $\cS=\M$ with an infinite threshold $S=+\infty$. It is proved along the same lines. As \citet{garivier2016optimal}, one proves that the supremum and the infimum are reached at a unique value, and in the sequel we denote by $\omega^*(\bm \mu)$  the optimal weights
\begin{equation}
\label{eq:def_omega_star}
\omega^*(\bm\mu):=\argmax_{\omega\in\Sigma_K} \inf_{\bm \lambda \in \alt(\bm \mu, \cS)} \sum_{a=1}^{K} \omega_a \frac{(\mu_a - \lambda_a)^2}{2}\,.
\end{equation}

\subsection{The Two-armed Bandit Case}
\label{sec:two_arms}
As a warm-up, we treat in the section the case $K=2$. Here (only), one can find an explicit formula for the characteristic times.
\begin{proposition}
When $K=2$, 
\begin{align}
T_{\I}^*(\bm \mu)^{-1}&= \frac{(2S-\mu_{1}-\mu_{2})^2}{8}, \label{eq:lb_2arm_unknown_gap_monotonic}\\
T_{\M}^*(\bm \mu)^{-1}&=\frac{ \min\!\left((2S-\mu_1-\mu_2)^2,(\mu_1-\mu_2)^2\right)}{8}\,.
\label{eq:lb_2arm_unknown_gap_non_monotonic}
\end{align}
\label{prop:lb_2arm_unknown_gap}
\end{proposition}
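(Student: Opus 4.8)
The plan is to exploit the one-dimensional geometry available for $K=2$ and to compute the value of the $\sup$--$\inf$ in \eqref{eq:general_characteristic_time} explicitly. Writing $\omega=(w,1-w)$ with $w\in[0,1]$ and
\[
f_w(\bm\lambda)=w\,\frac{(\mu_1-\lambda_1)^2}{2}+(1-w)\,\frac{(\mu_2-\lambda_2)^2}{2},
\]
we have $T_\cS^*(\bm\mu)^{-1}=\sup_{w\in[0,1]}\inf_{\bm\lambda\in\alt(\bm\mu,\cS)}f_w(\bm\lambda)$. The first step is to describe the alternative sets geometrically. An arm is optimal when it is closest to $S$, so the locus where the identity of the optimal arm changes is $\{|\lambda_1-S|=|\lambda_2-S|\}$, which factors as the union of the two lines $\{\lambda_1=\lambda_2\}$ and $\{\lambda_1+\lambda_2=2S\}$ (the diagonal and the anti-diagonal through $(S,S)$). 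In the non-monotonic case $\cS=\M$ both lines bound the region where the optimal arm is switched, whereas in the increasing case $\cS=\I$ the constraint $\lambda_1<\lambda_2$ discards the diagonal, leaving only $\{\lambda_1+\lambda_2=2S\}$. This dichotomy is exactly what produces the extra $\min$ in \eqref{eq:lb_2arm_unknown_gap_non_monotonic} and its absence in \eqref{eq:lb_2arm_unknown_gap_monotonic}.

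For the upper bounds I would test the inner infimum against two explicit \emph{balanced} alternatives. On the anti-diagonal, the point $\bm\lambda^{\mathrm{a}}=\bigl(\mu_1-c,\mu_2-c\bigr)$ with $c=(\mu_1+\mu_2-2S)/2$ satisfies $\mu_1-\lambda^{\mathrm a}_1=\mu_2-\lambda^{\mathrm a}_2=c$, so $f_w(\bm\lambda^{\mathrm a})=c^2/2=(2S-\mu_1-\mu_2)^2/8$ \emph{independently of $w$}. On the diagonal, the point $\bm\lambda^{\mathrm{d}}=(\bar\mu,\bar\mu)$ with $\bar\mu=(\mu_1+\mu_2)/2$ gives $|\mu_1-\bar\mu|=|\mu_2-\bar\mu|=|\mu_1-\mu_2|/2$, hence $f_w(\bm\lambda^{\mathrm d})=(\mu_1-\mu_2)^2/8$, again for every $w$. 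Both points lie in the closure of the relevant alternative set (they realise a tie between the two arms), and $\bm\lambda^{\mathrm d}$ is excluded when $\cS=\I$. Since $\inf_{\bm\lambda}f_w$ is bounded above by $f_w$ evaluated at any admissible $\bm\lambda$, this gives, uniformly in $w$, the bounds $\sup_w\inf_{\bm\lambda\in\alt(\bm\mu,\I)}f_w\le (2S-\mu_1-\mu_2)^2/8$ and $\sup_w\inf_{\bm\lambda\in\alt(\bm\mu,\M)}f_w\le \min\bigl((2S-\mu_1-\mu_2)^2,(\mu_1-\mu_2)^2\bigr)/8$.

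For the matching lower bounds I would evaluate the inner infimum at the symmetric weight $w=1/2$. For fixed $w$, $f_w$ is a strictly convex quadratic whose unconstrained minimiser is $(\mu_1,\mu_2)$, which sits in the open region where the optimal arm equals $a^*_{\bm\mu}$. The closure $\overline{\alt(\bm\mu,\M)}$ is a closed double wedge whose topological boundary is precisely the union of the two lines, which it contains entirely; hence the constrained infimum is attained on that boundary and equals the smaller of the two one-dimensional infima of $f_w$ along the two lines. In the increasing case, $\overline{\alt(\bm\mu,\I)}$ is the intersection of that wedge with $\{\lambda_1\le\lambda_2\}$, the diagonal degenerates into the inert edge of the feasible domain, and only the anti-diagonal survives as a flip boundary. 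A one-line Lagrange computation shows that at $w=1/2$ the minimisers of $f_{1/2}$ along $\{\lambda_1+\lambda_2=2S\}$ and $\{\lambda_1=\lambda_2\}$ are exactly $\bm\lambda^{\mathrm a}$ and $\bm\lambda^{\mathrm d}$, which are feasible. Therefore $\inf_{\bm\lambda}f_{1/2}$ equals the announced constants, the reverse inequalities hold, and both formulas follow.

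The main obstacle, and the only place requiring care, is justifying the reduction of the inner infimum to these line minimisations: one must (i) pass from $\alt(\bm\mu,\cS)$ to its closure, on which $f_w$ attains its infimum by continuity while the value of the infimum is unchanged; (ii) check that the computed projections genuinely satisfy the defining constraints, namely that $\bm\lambda^{\mathrm a}$ obeys $\lambda_1\le\lambda_2$ (which holds since $\mu_1<\mu_2$ in $\I$) and that both feet of perpendicular at $w=1/2$ fall on the correct bounding lines rather than their irrelevant extensions; and (iii) confirm that $w=1/2$ is indeed maximising, which here is immediate because the $w$-independent upper bounds of the second step are already met at $w=1/2$. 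Everything else is the elementary quadratic algebra sketched above.
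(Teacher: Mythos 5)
Your argument is correct: the geometric description of the flip locus as the union of the diagonal $\{\lambda_1=\lambda_2\}$ and the anti-diagonal $\{\lambda_1+\lambda_2=2S\}$ is exactly right, the two equalizer points $\bm\lambda^{\mathrm a}$ and $\bm\lambda^{\mathrm d}$ do lie in the closures of the relevant alternative sets and give $w$-independent upper bounds, and the evaluation of the constrained infimum at $w=1/2$ (where the weighted projections onto each line are precisely those two points, with $\bm\lambda^{\mathrm a}$ satisfying $\lambda_1\le\lambda_2$) closes the sandwich. The reduction of the infimum over the closed double wedge (resp.\ the convex set $\{\lambda_1\le\lambda_2,\ \lambda_1+\lambda_2\ge 2S\}$) to the line minima is justified by the convexity argument you sketch, since the unconstrained minimizer $(\mu_1,\mu_2)$ lies outside the alternative set.

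Your route differs from the paper's in two respects. For the non-monotonic case the paper does not argue in the plane at all: it invokes its general-$K$ Lemma~\ref{lem:alternative_general_input_general}, which computes $\inf_{\bm\lambda\in\alt(\bm\mu,\M)}$ for every $\omega$ as $\frac{\omega_{a^*}\omega_b}{2(\omega_{a^*}+\omega_b)}\min\bigl((\mu_{a^*}-\mu_b)^2,(2S-\mu_{a^*}-\mu_b)^2\bigr)$ by a one-dimensional translation argument, and then maximizes over $\omega$; you instead give a self-contained two-dimensional proof. For the increasing case the paper changes variables to $(m,\Delta)$, computes the inner infimum $A(\omega)$ explicitly as a function of $\omega$ (with a case distinction), and maximizes; you avoid computing $A(\omega)$ for general $\omega$ altogether by exhibiting a $w$-independent upper bound and matching it at $w=1/2$. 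The paper's approach buys the exact formula for the inner infimum at every $\omega$ (useful elsewhere, e.g.\ for identifying the optimal weights and for the tracking algorithm), while yours is shorter and makes the geometry of the two types of optimal alternatives more transparent. Both are complete; the only point worth making explicit in a write-up is the (symmetric) choice of which arm is optimal, and the observation that $\bar\mu\neq S$ for $\bm\mu\in\M$, which guarantees that $\bm\lambda^{\mathrm d}$ is indeed a limit of genuine alternatives.
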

\begin{proof}
\label{proof:lb_2arm_unknown_gap}
The Equality~\eqref{eq:lb_2arm_unknown_gap_non_monotonic} is a simple consequence of Lemma~\ref{lem:alternative_general_input_general} proved in Section~\ref{sec:proof_complexity_non_monotonic}. It remains to treat the first Equality~\eqref{eq:lb_2arm_unknown_gap_monotonic}.
Let $\bm\mu\in\I$ and suppose, without loss of generality, that arm $2$ is optimal. Let $m=(\mu_1+\mu_2)/2$ be the mean of two arms and $\Delta=\mu_2-\mu_1$ be the gap. Noting that 
\begin{align*}
    \{\text{arm $1$ is optimal}\} &\Leftrightarrow m>S \quad \text{and} \\
    \{\text{arm $2$ optimal}\} &\Leftrightarrow m<S\,,
\end{align*}
we obtain 
\begin{align*}
    T_{\I}^*(\mu)^{-1}&=\sup_{\omega\in[0,1]}\inf_{\big\{\mu_1'<\mu_2',\, |S-\mu_1'|<|S-\mu_2'|\big\}} \frac{\omega}{2}(\mu_1-\mu_1')^2+ \frac{1-\omega}{2}(\mu_2-\mu_2')^2\\
    &= \sup_{\omega\in[0,1]}A(\omega)\,,
\end{align*}
where $m'=(\mu_1'+\mu_2')/2$, $\Delta'=\mu_2'-\mu_1'$ and we denote by $A(\omega)$ the function 
\begin{align*}
    A(\omega):=\inf_{\{\Delta'>0,\,m'>S\}}& \frac{\omega}{2}\big(m-m'-(\Delta-\Delta')/2\big)^2+ \frac{1-\omega}{2}\big(m-m'+(\Delta-\Delta')/2\big)^2\,.
\end{align*}
Writing $\chi=S-m$, easy computations lead to 
\begin{equation*}
    A(\omega)=\begin{cases}
      2\omega (1-\omega) \chi^2 \qquad\qquad\text{ if } \Delta+2(2\omega-1)\chi>0\,, \\
      \big(\chi^2+(\Delta/2)^2+(2\omega-1)\chi\Delta\big)/2 \qquad \text{else.}
    \end{cases}
\end{equation*}
Thus, since the maximum of $A$ is attained at $\omega=1/2$, we just proved that $T_{\I}^*(\mu)^{-1}=\chi^2/2$. 
\end{proof}

Note that for both alternative sets the optimal weights defined in Equation~\eqref{eq:def_omega_star} are uniform: $\omega^*=[1/2,1/2]$. If the alternative set is $\I$, the optimal alternative, i.e. the element $\bm \lambda$ of $\overline{\alt(\bm \mu,\I)}$ (the closure of $\alt(\bm \mu,\I)$) which reaches the infimum in \eqref{eq:general_characteristic_time} for the optimal weights $\omega^*$, is $\bm \lambda=[S-(\mu_2-\mu_1)/2,S+(\mu_2-\mu_1)/2]$. In words, in the optimal alternative the arms are translated in such a way that the mean of the two mean values is moved to the threshold $S$. If the alternative set is $\M$ and $\bm \mu \in \I$, the optimal alternatives can be of two different forms. If the threshold is between the two mean values, then the optimal alternative is the same as for the increasing case. Otherwise, the optimal alternative is identical to the one of Best Arm Identification (see \citet{garivier2016optimal}): $\bm \lambda=[(\mu_1+\mu_2)/2,(\mu_1+\mu_2)/2]$. Thus, if $\mu_1 \leq S \leq \mu_2$, the two characteristic times coincide, as can be seen in Figure~\ref{fig:comparison_two_arms}.
\begin{figure}
    \centering
    \includegraphics[width=0.7\textwidth]{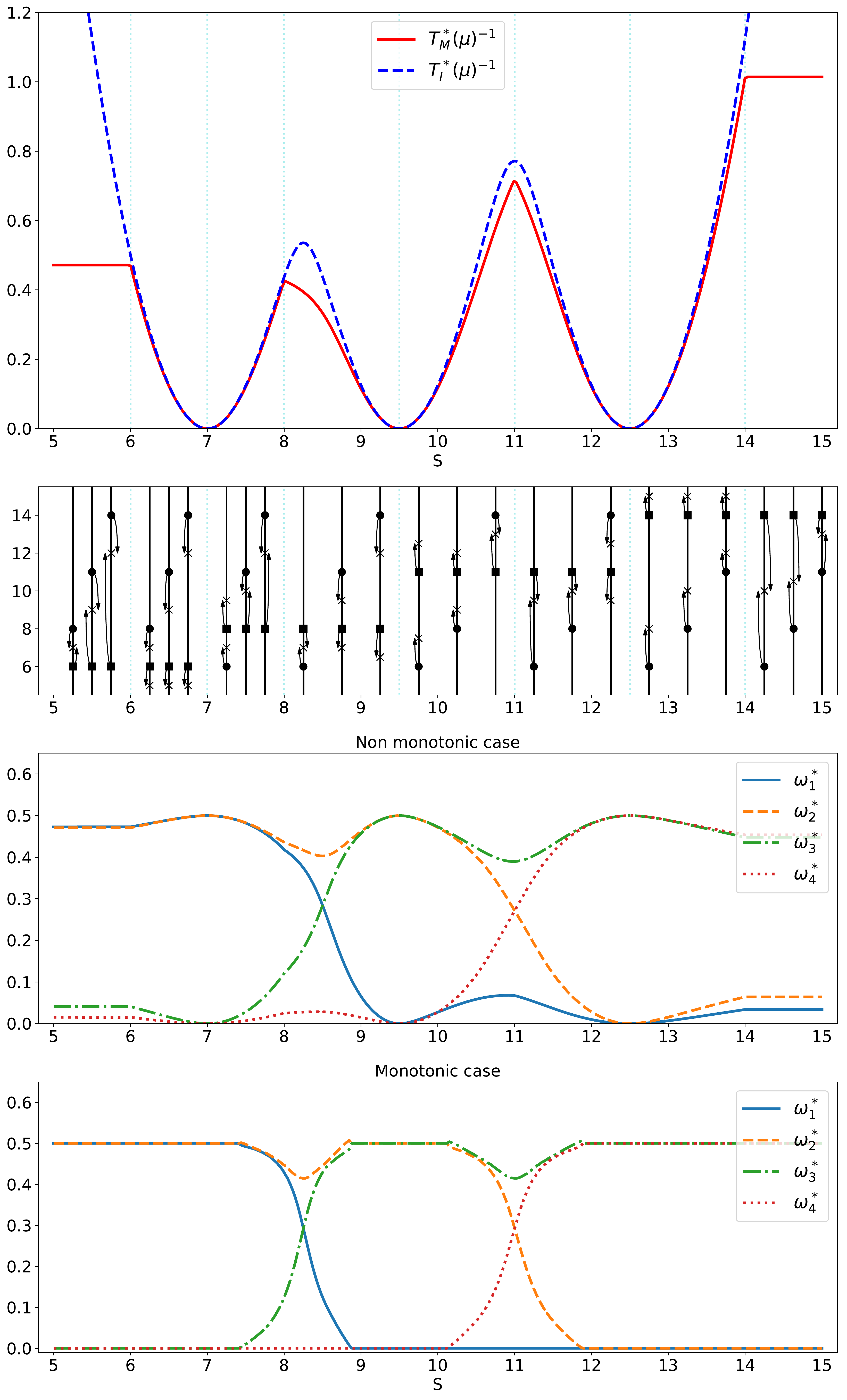}
    \caption{The complexity terms in the bandit model $\bm \mu = (6, 8, 11, 14)$.
    \textit{Top:} inverse of the characteristic time as a function of the threshold $S$; red solid line: non-monotonic case $\cS=\M$; blue dotted line: increasing case $\cS=\I$. 
    \textit{Middle:} how to move the means to get from the initial bandit model to the optimal alternative in $\M$.  
    \textit{Bottom:} the optimal weights in function of the threshold $S$.}
    \label{fig:comparison}
\end{figure}
\clearpage

\begin{figure}
    \centering
    \includegraphics[width=0.6\textwidth]{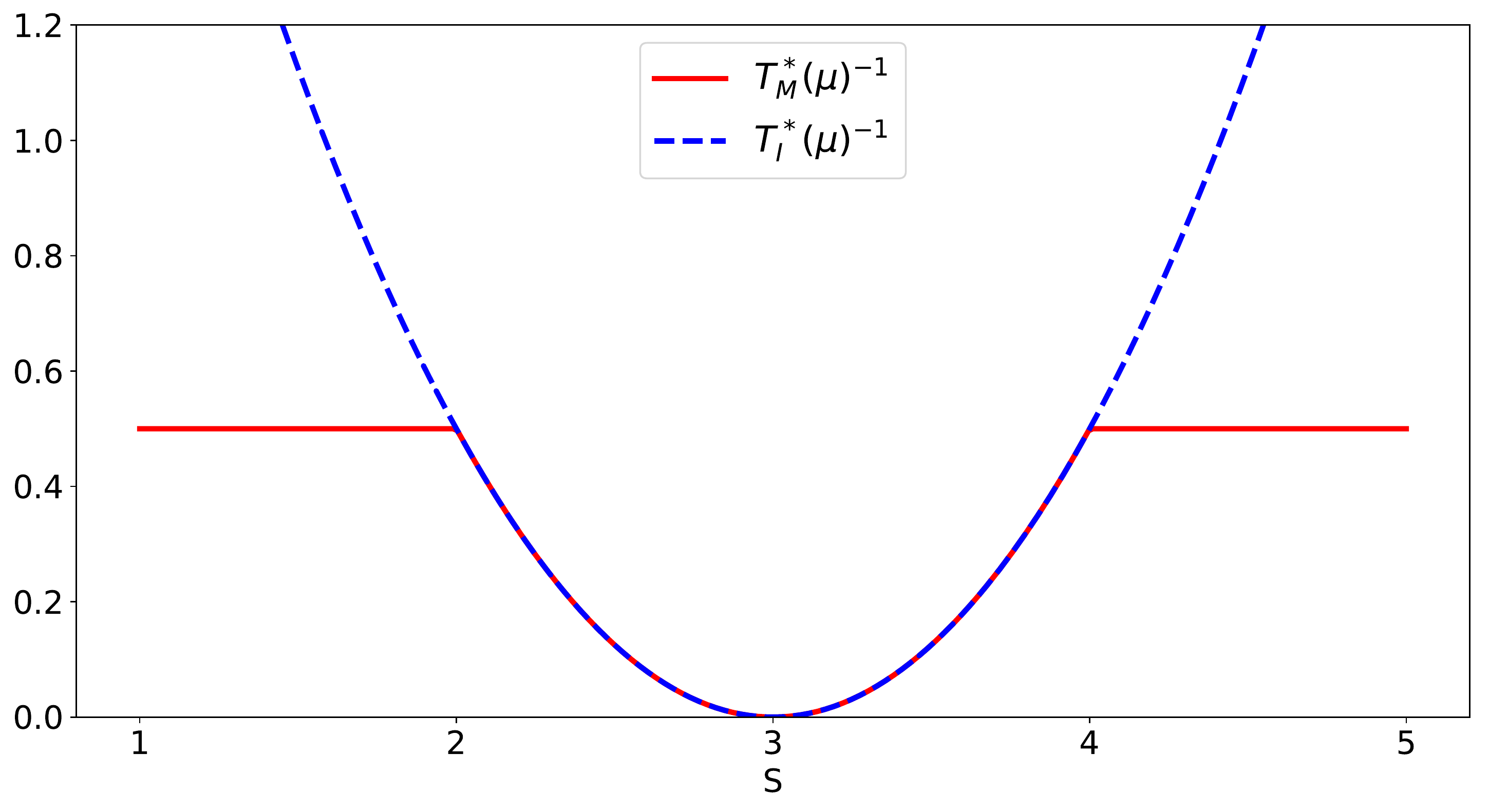}
    \caption{Inverse of the characteristic times as a function of the threshold $S$, for $\bm \mu = [2, 4]$. Solid red: general thresholding case ($\cS=\M$). Dotted blue: increasing case ($\cS=\I$).}
    \label{fig:comparison_two_arms}
\end{figure}

\subsection{On the Characteristic Time and the Optimal Proportions}
\label{sec:characteristic_time}
We now illustrate, compare and comment the different complexities for a general bandit model $\bm \mu \in \I$ with $K\geq 2$ (see Figure~\ref{fig:comparison}). Since $\I\subset\M$, it is obvious that $T_{\I}^*(\bm \mu) \leq  T_{\M}^*(\bm \mu)$. The difference $ T_{\M}^*(\bm \mu)- T_{\I}^*(\bm \mu)$ is almost everywhere positive, and can be very large. Both  $T_{\I}^*(\bm \mu)$ and  $T_{\M}^*(\bm \mu)$ tend to $+\infty$ as $S$ tends to middle of two consecutive arms. 

\paragraph{On the structure of the optimal weights in the non-monotonic case.}
\begin{lemma}
\label{lem:alternative_general_input_general}
For all $\bm \mu\in\M$,
\begin{equation}
    T_{\M}^*(\bm \mu)^{-1}= \max_{\omega\in\Sigma_K}\nonumber \min_{b\neq a^*}\frac{\omega_{a^*_{\bm \mu}} \omega_b}{2(\omega_{a^*_{\bm \mu}}+\omega_b)}\min \big((\mu_{a^*_{\bm \mu}}-\mu_b)^2,(2S-\mu_{a^*_{\bm \mu}}-\mu_b)^2\big)\,.
    \label{eq:alternative_general_input_general}
\end{equation}
\end{lemma}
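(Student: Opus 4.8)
The plan is to start from the defining expression \eqref{eq:general_characteristic_time} with $\cS=\M$, fix a weight vector $\omega\in\Sigma_K$, compute the inner infimum $\inf_{\bm\lambda\in\alt(\bm\mu,\M)}\sum_{a}\omega_a(\mu_a-\lambda_a)^2/2$ in closed form, and then take the supremum over $\omega$ (a maximum, by the attainment noted after Theorem~\ref{th:general_lb}). Write $a^*=a^*_{\bm\mu}$. The central observation is that every alternative $\bm\lambda$ has some optimal arm $b=a^*_{\bm\lambda}\neq a^*$, hence $|\lambda_b-S|\leq|\lambda_{a^*}-S|$; conversely, making a single competitor $b$ at least as close to $S$ as $a^*$ already destroys the unique optimality of $a^*$. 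I would therefore reduce the $K$-dimensional problem to a family of two-variable problems indexed by $b\neq a^*$, whose common value I denote $C_b(\omega)$.

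For the lower bound, given any $\bm\lambda\in\alt(\bm\mu,\M)$ with optimal arm $b$, I discard all nonnegative summands except those for $a^*$ and $b$ and relax the full constraint to the single inequality $|\lambda_b-S|\leq|\lambda_{a^*}-S|$, so that $\sum_{a}\omega_a(\mu_a-\lambda_a)^2/2\geq C_b(\omega)\geq\min_{b\neq a^*}C_b(\omega)$; taking the infimum gives one inequality. For the matching upper bound I fix $b$, keep $\lambda_c=\mu_c$ for $c\notin\{a^*,b\}$, and set $(\lambda_{a^*},\lambda_b)$ equal to the minimizer of the two-variable problem. By construction arm $b$ ties $a^*$, so $a^*$ is no longer the unique optimal arm and the resulting model lies in $\overline{\alt(\bm\mu,\M)}$, yielding $\inf\leq C_b(\omega)$ for every $b$ and hence $\inf\leq\min_{b\neq a^*}C_b(\omega)$. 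The one point requiring care is that it is irrelevant whether $b$ itself becomes optimal: the tie alone already produces a valid alternative, which is exactly what lets me ignore the remaining arms.

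It then remains to evaluate, for fixed $b$, the problem of minimizing $g(\lambda_{a^*},\lambda_b)=\tfrac{\omega_{a^*}}{2}(\mu_{a^*}-\lambda_{a^*})^2+\tfrac{\omega_b}{2}(\mu_b-\lambda_b)^2$ over the region $R=\{|\lambda_b-S|\leq|\lambda_{a^*}-S|\}$. The unconstrained minimizer $(\mu_{a^*},\mu_b)$ lies outside $R$ because $a^*$ is strictly optimal, so—although $R$ is \emph{not convex}—convexity and coercivity of $g$ force the minimum onto $\partial R$, which is the union of the two lines $L_1=\{\lambda_b=\lambda_{a^*}\}$ and $L_2=\{\lambda_{a^*}+\lambda_b=2S\}$. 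Each restriction is a weighted least-squares projection of $(\mu_{a^*},\mu_b)$ onto a line, giving on $L_1$ the value $\tfrac{\omega_{a^*}\omega_b}{2(\omega_{a^*}+\omega_b)}(\mu_{a^*}-\mu_b)^2$ and on $L_2$ the value $\tfrac{\omega_{a^*}\omega_b}{2(\omega_{a^*}+\omega_b)}(2S-\mu_{a^*}-\mu_b)^2$. Taking the smaller of the two yields $C_b(\omega)=\tfrac{\omega_{a^*}\omega_b}{2(\omega_{a^*}+\omega_b)}\min\!\big((\mu_{a^*}-\mu_b)^2,(2S-\mu_{a^*}-\mu_b)^2\big)$, and substituting into $\inf=\min_{b\neq a^*}C_b(\omega)$ and maximizing over $\omega$ gives \eqref{eq:alternative_general_input_general}.

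The step I expect to be the main obstacle is the upper bound: one must argue that moving only the two arms $a^*$ and $b$ genuinely attains (in the closure) the relaxed value $C_b(\omega)$, handling cleanly the possibility that the two-arm perturbation makes a \emph{third} arm the closest to $S$. The resolution is precisely the ``tie suffices'' remark above, which decouples the $b$-indexed subproblems; the non-convexity of $R$ is the secondary subtlety, settled by locating the unconstrained optimum outside $R$.
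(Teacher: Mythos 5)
Your proposal is correct and follows essentially the same route as the paper's proof: decompose $\alt(\bm\mu,\M)$ over the identity of the competing arm $b$, reduce to the two coordinates $(\lambda_{a^*},\lambda_b)$, observe that the constrained minimum is attained where $|\lambda_b-S|=|\lambda_{a^*}-S|$, and evaluate the two weighted projections onto the lines $\lambda_b=\lambda_{a^*}$ and $\lambda_{a^*}+\lambda_b=2S$. Your added justifications (the boundary argument for the non-convex region $R$, and the ``tie suffices'' remark placing the perturbed model in the closure of the alternative set) are points the paper passes over tersely, but they do not change the argument.
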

In the non-monotonic case $\cS=\M$, there are two types of optimal alternatives (as in Section~\ref{sec:two_arms}). Indeed, the proof of Lemma~\ref{lem:alternative_general_input_general} in Appendix~\ref{sec:proof_complexity_non_monotonic} shows that the best alternative takes one of the two following forms. 
Either the optimal arm $\mu_{a^*_{\bm \mu}}$ and its challenger $\mu_b$ are moved to a pondered mean (by the optimal weights $\omega^*$) of the two arms (just like in the Best Arm Identification problem), leading to a constant $(\mu_{a^*_{\bm \mu}}-\mu_b)^2$ in Equation~\eqref{eq:alternative_general_input_general}. 
Or, as in the increasing case $\cS=\I$ (see the proof of Proposition~\ref{prop:lb_2arm_unknown_gap}), both arms $\mu_{a^*_{\bm \mu}}$ and $\mu_b$ are translated in the same direction, leading to the constant $(2S-\mu_{a^*_{\bm \mu}}-\mu_b)^2$. 
Figure~\ref{fig:comparison} summarizes the different possibilities on a simple example with $K=4$ arms, for different values of the threshold $S$. According to the value of $S$, the best alternative is shown in the second plot from the top.

\paragraph{On the structure of the optimal weights in the increasing case.} In the increasing case $\cS=\I$, one can show the remarkable property that \emph{the optimal weights $\omega^*(\bm\mu)$ put mass only on the optimal arm and its two closest arms}. This strongly contrasts with the non-monotonic case, as illustrated at the bottom of Figure~\ref{fig:comparison}. For simplicity we assume that $1 < a^*_{\bm\mu} < K$. Let $\tomega$ be some weights in $\Sigma_3$. Let $D^+(\theta,\tomega)$ be the cost, with weights $\tomega$, for moving from the initial bandit problem $\bm \mu$ to a bandit problem $\bm\tlambda^+$ where arm $a^*_{\bm\mu}$ has mean $\theta \leq S$ and $S$ is halfway between  $\mu_{a^*_{\bm\mu}}$ and $\mu_{a^*_{\bm\mu}+1}$,
\begin{equation*}
    \tlambda_a^+=\left\{\begin{array}{ll}
        \mu_a  & \text{ if } a>a^*_{\bm\mu}+1 \,, \\
        2S-\theta  & \text{ if } a=a^*_{\bm\mu}+1\,,\\
        \theta  & \text{ if } a=a^*_{\bm\mu}\,,\\
        \min(\theta,\mu_a)  & \text{ if } a\leq a^*_{\bm\mu}-1\,.\\ 
    \end{array}\right.
\end{equation*}
The explicit formula for $D^+(\theta,\tomega)$ is 
\begin{align*}
D^+(\theta &,\tomega)=
    \tomega_{-1}\frac{\big(\mu_{a^*_{\bm\mu}-1} - \min(\mu_{a^*_{\bm\mu}-1},\theta)\big)^2}{2}+\tomega_0\frac{(\mu_{a^*_{\bm\mu}} - \theta)^2}{2}+\tomega_{1} \frac{\big(\mu_{a^*_{\bm\mu}+1} - (2 S-\theta)\big)^2}{2}\,.
\end{align*}
Similarly we can do the same with arm $a_{\bm\mu}^*-1$: moving from $\bm\mu$ to a bandit problem $\bm\tlambda^-$, defined for $\theta\geq S$ by 
\begin{equation*}
    \tlambda_a^-=\left\{\begin{array}{ll}
        \mu_a  & \text{ if } a<a^*_{\bm\mu}-1 \,, \\
        2S-\theta  & \text{ if } a=a^*_{\bm\mu}-1\,,\\
        \theta  & \text{ if } a=a^*_{\bm\mu}\,,\\
        \max(\theta,\mu_a)  & \text{ if } a\geq a^*_{\bm\mu}+1\,,\\ 
    \end{array}\right.
\end{equation*}
where both arms $a^*_{\bm\mu}-1$ and $a^*_{\bm\mu}$ are optimal. For this alternative the cost is 
\begin{align*}
    D^-(\theta &,\tomega)=
    \tomega_{-1}\frac{\big(\mu_{a^*_{\bm\mu}-1} -(2 S-\theta) \big)^2}{2}+\tomega_0\frac{(\mu_{a^*_{\bm\mu}} - \theta)^2}{2}+\tomega_{1} \frac{\big(\mu_{a^*_{\bm\mu}+1} - \max(\mu_{a^*_{\bm\mu}+1},\theta)\big)^2}{2}\,.
\end{align*}
It appears, see the proof of Proposition~\ref{prop:simplified_general_characteristic_time} in Appendix~\ref{sec:exprCompIncr}, that these two types of alternative $\bm\tlambda^+$ and $\bm\tlambda^-$ are the optimal one. Note that they are also in $\overline{\alt(\bm \mu,\I)}$, the closure of the set of alternatives of $\bm\mu$. 
\begin{proposition}
For all $\bm \mu \in \I$,
\begin{align}
    T_{\I}^*(\bm \mu)^{-1}=\sup_{\tomega \in\Sigma_3} &\min\!\!\bigg( \min_{\{2S-\mu_{a^*_{\bm\mu}+1}\leq\theta\leq S\}} D^+(\theta,\tomega),\, \min_{\{S\leq \theta \leq 2S-\mu_{a^*_{\bm\mu}-1}\}} D^-(\theta,\tomega)\bigg)\,.
\label{eq:simplified_general_characteristic_time}
\end{align}
\label{prop:simplified_general_characteristic_time}
\end{proposition}
The intuition behind this proposition is that if we try to transform $\bm \mu$ into an alternative $\bm \lambda$ with $b>a^*_{\bm \mu}+1$ as optimal arm we have to pass by an alternative with optimal arm $a^*_{\bm \mu}+1$ since we impose to the means to be increasing. It remains to see that this intermediate alternative has always a smaller cost. The cases with $a^*_{\bm\mu}=1 \text{ or } K$ are similar considering only the the alternatives $\bm\tlambda^+$ if $a^*_{\bm\mu}=1$ and $\bm\tlambda^-$ if $a^*_{\bm\mu}=K$. We can also derive bounds on the characteristic time to see that the dependence in $K$ disappear. It is important to note that this property is really asymptotic when $\delta$ goes to zero and it is not clear at all that the dependence of the complexity in $K$ would also disappear for moderate value of $\delta$, we think it is not the case.
\begin{proposition}
\label{prop:bounds_characteristic_time}
For all $\bm \mu \in \I$ such that $1< a^*_{\bm\mu}<K$, considering the gaps: $\Delta_{-1}^2=(2S-\mu_{a^*_{\bm\mu}-1}-\mu_{a^*_{\bm\mu}})^2/8$, $\Delta_{1}^2=(2S-\mu_{a^*_{\bm\mu}+1}-\mu_{a^*_{\bm\mu}})^2/8$ and $\Delta_{0}^2=\min(\Delta_{-1}^2, \Delta_{1}^2)$,
\begin{equation}
    \label{eq:bounds_characteristic_time}
    \frac{1}{\Delta_0^2}\leq T_{\I}^*(\bm \mu) \leq \sum_{k=-1}^1\frac{1}{\Delta_k^2}\leq \frac{3}{\Delta_0^2}\,.
\end{equation}
\end{proposition}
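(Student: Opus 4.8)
Throughout, write $i:=a^*_{\bm\mu}$ and, following Proposition~\ref{prop:simplified_general_characteristic_time}, set $g^\pm(\tomega):=\min_\theta D^\pm(\theta,\tomega)$ so that $T_\I^*(\bm\mu)^{-1}=\sup_{\tomega\in\Sigma_3}\min\!\big(g^+(\tomega),g^-(\tomega)\big)$. The rightmost inequality in \eqref{eq:bounds_characteristic_time} is immediate: $\Delta_0^2=\min(\Delta_{-1}^2,\Delta_1^2)\leq\Delta_k^2$ gives $1/\Delta_k^2\leq1/\Delta_0^2$ for $k\in\{-1,0,1\}$, whence $\sum_{k=-1}^1 1/\Delta_k^2\leq 3/\Delta_0^2$. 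It remains to prove the two central inequalities; I would obtain the lower bound on $T_\I^*(\bm\mu)$ by bounding $g^\pm$ at \emph{every} weight, and the upper bound by choosing \emph{one} explicit weight.

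For the lower bound $1/\Delta_0^2\leq T_\I^*(\bm\mu)$, equivalently $T_\I^*(\bm\mu)^{-1}\leq\Delta_0^2$, the key claim is that
\[
g^\pm(\tomega)\leq\Delta_{\pm1}^2\qquad\text{for every }\tomega\in\Sigma_3 .
\]
Granting this, $\min(g^+,g^-)\leq\min(\Delta_{1}^2,\Delta_{-1}^2)=\Delta_0^2$ for all $\tomega$, and the supremum is $\leq\Delta_0^2$. To prove the claim I would simply evaluate $D^\pm$ at the gap-preserving translation point $\theta^\pm_\star$ that sends the midpoint of arms $i$ and $i\pm1$ onto $S$. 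Optimality of $i$ gives $(\mu_{i-1}+\mu_i)/2\leq S\leq(\mu_i+\mu_{i+1})/2$, from which one checks that $\theta^\pm_\star$ is admissible and that the two moved terms contribute exactly $(\tomega_0+\tomega_{\pm1})\Delta_{\pm1}^2$. The only remaining term is the isotonic ``drag'' on the third arm; its displacement satisfies $(\mu_{i-1}-\theta^+_\star)_+^2\leq 2\Delta_1^2$ and $(\theta^-_\star-\mu_{i+1})_+^2\leq 2\Delta_{-1}^2$, each reducing after simplification to the trivial inequality $\mu_{i-1}\leq\mu_i$ resp.\ $\mu_i\leq\mu_{i+1}$. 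Hence the drag contributes at most $\tomega_{\mp1}\Delta_{\pm1}^2$, and summing the three terms gives $g^\pm(\tomega)\leq D^\pm(\theta^\pm_\star,\tomega)\leq\Delta_{\pm1}^2$.

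For the upper bound $T_\I^*(\bm\mu)\leq\sum_{k=-1}^1 1/\Delta_k^2$, equivalently $T_\I^*(\bm\mu)^{-1}\geq\bigl(\sum_k 1/\Delta_k^2\bigr)^{-1}$, I would instead lower bound each $g^\pm$ at a well-chosen weight. Dropping the nonnegative third-arm term of $D^\pm$ and relaxing the constraint to $\theta\in\R$ leaves a weighted least-squares problem on the line of fixed sum $\mu_i+\mu_{i\pm1}-2S$, giving
\[
g^\pm(\tomega)\geq\frac{\tomega_0\,\tomega_{\pm1}}{\tomega_0+\tomega_{\pm1}}\,4\Delta_{\pm1}^2 .
\]
Taking $\tomega_k\propto 1/\Delta_k^2$ and using $\Delta_0^2=\min(\Delta_{-1}^2,\Delta_1^2)$, a direct computation shows that both right-hand sides are at least $\bigl(\sum_k 1/\Delta_k^2\bigr)^{-1}$ (the direction realising $\Delta_0^2$ even gives twice as much), so that $\min(g^+,g^-)$ at this weight already exceeds $\bigl(\sum_k 1/\Delta_k^2\bigr)^{-1}$ and the supremum is at least as large.

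I expect the delicate point to be the drag control in the lower bound: a priori the translation point can push the third arm across its neighbour, and one must see that the extra cost this creates is \emph{exactly} absorbed by the slack between $(\tomega_0+\tomega_{\pm1})\Delta_{\pm1}^2$ and $\Delta_{\pm1}^2$. The two elementary monotonicity inequalities above are precisely what make this compensation work, so that—perhaps surprisingly—no case analysis on which arms get dragged is required, and the constant $1$ in $1/\Delta_0^2\leq T_\I^*(\bm\mu)$ comes out sharp.
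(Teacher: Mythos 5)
Your proof is correct and follows essentially the same route as the paper's: the bound $T_\I^*(\bm\mu)^{-1}\leq\Delta_0^2$ by evaluating $D^\pm$ at the midpoint translation and absorbing the third-arm drag (via $\mu_{i-1}\leq\mu_i$, resp.\ $\mu_i\leq\mu_{i+1}$), and the bound $T_\I^*(\bm\mu)^{-1}\geq\bigl(\sum_k1/\Delta_k^2\bigr)^{-1}$ by choosing weights proportional to $1/\Delta_k^2$ and dropping the drag term. The only (immaterial) difference is that you solve the resulting one-dimensional weighted least-squares problem exactly, where the paper settles for the cruder observation that at least one of the two displaced terms contributes $\Delta_{\pm1}^2$.
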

\section{An Asymptotically Optimal Algorithm}
\label{sec:asymptotically_optimal_algorithm}
We present in this section an asymptotically optimal algorithm inspired by the \textit{Direct-tracking} procedure of \citet{garivier2016optimal} (which borrows the idea of tracking from GAFS-MAX algorithm of \citet{antos2008active}).
At any time $t\geq 1$ let $h(t)=(\sqrt{t}-K/2)_+$ (where $(x)_+$ stands for the positive part of $x$) and $U_t = \{ a : N_a(t) < h(t)\}$ be the set of "abnormally rarely sampled" arms. After $t$ rounds the empirical mean of arm a is 
\begin{equation*}
\hmu_a(t)=\hmu_{a,N_a(t)}=\frac{1}{N_a(t)}\sum_{s=1}^{t} Y_s\, \ind_{\{A_s=a\}}\,,
\end{equation*}
where $N_a(t) = \sum_{s=1}^t \ind_{\{ A_s = a \}}$ denotes the number of draws of arm $a$ up to and including time $t$.

\vspace{0.3cm}

				\begin{algorithm}[H]
					\label{alg:general_case}
					\smallskip
					\textbf{Sampling rule}\\
					\[A_{t+1} \in \left\{\begin{array}{ll}
                    \underset{a \in U_t}{\text{argmin}}  \ N_a(t) \ \text{if} \ U_t \neq \emptyset & (\textit{forced  exploration})\\
                    \underset{1 \leq a \leq K}{\text{argmax}} \ t \;w_a^*(\widehat{\bm\mu}(t)) -{N_a}(t) & (\textit{direct tracking})
                  \end{array}
\right.\]
					\textbf{Stopping rule}\\
					    \begin{align}
					        &\tau_\delta=\inf\Bigg\{ t\in \N^*:\,\widehat{\bm \mu}(t)\in \M\text{ and }
					        &\inf_{\bm\lambda \in \alt\big(\widehat{\bm \mu}(t),\cS\big)} \sum_{a=1}^{K} N_a(t) \frac{(\hmu_a(t) - \lambda_a)^2}{2}>\beta(t,\delta)
					        \Bigg\}\,.
					   \label{eq:stopping_rule}     
					    \end{align}
					\textbf{Decision rule}\\
					\begin{equation*}
					    \widehat{a}_{\tau_\delta}\in \argmin_{1\leq a\leq K}\big|\hmu_a(\tau_\delta)-S\big|\,.
					\end{equation*}
				\caption{Algorithme (Direct-tracking).}	
				\end{algorithm}

\vspace{0.2cm}
\noindent When $L:= \mathrm{Card}\{ \argmin_{1\leq a\leq K}|\hmu_a(t)-S| \} > 1$, we adopt the convention that $T_{\cS}^*\big(\bm \hmu(t)\big)^{-1}=0$
and 
\begin{equation*}
    w_a^*(\widehat{\bm\mu}(t))=\left\{\begin{array}{ll}
        1/L & \text{if }a \in \argmin_{1\leq a\leq K}|\hmu_a(t)-S|\;, \\
        0  & \text{otherwise.} 
    \end{array}\right.
\end{equation*}

\begin{theorem}[Asymptotic optimality]~\\
For $\cS\in\{\I,\M\}$, for the constant 
$C= e^{K+1}\left(\frac{2}{K}\right)^{\!\! K} \big(2(3K+2)\big)^{3K} \frac{4}{\log(3)}$ 
and for $\beta(t,\delta)=\log(t C/\delta)+(3K+2)\loglog(t C/\delta)$, Algorithm~\ref{alg:general_case} is $\delta$-correct on $\cS$ and asymptotically optimal, in the sense that
\begin{equation}
\limsup\limits_{\delta\rightarrow 0} \frac{\E_{\bm \mu}[\tau_\delta]}{\log(1/\delta)}\leq  T_{\cS}^*(\bm \mu)\,.
\label{eq:general_ub}
\end{equation}
\label{th:general_ub}
\end{theorem}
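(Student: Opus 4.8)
The plan is to follow the Track-and-Stop template of \citet{garivier2016optimal}, splitting the statement into (i) $\delta$-correctness and (ii) the asymptotic upper bound on $\E_{\bm\mu}[\tau_\delta]$, and adapting each analytic ingredient to the two alternative sets $\M$ and $\I$.

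For $\delta$-correctness I would argue as follows. The decision rule returns $\ha_{\tau_\delta}=\argmin_a|\hmu_a(\tau_\delta)-S|$, so an error forces the true model $\bm\mu$ itself to belong to $\alt\big(\hmu(\tau_\delta),\cS\big)$ (the true bandit becomes an alternative of the empirical one). On that event the GLR statistic appearing in the stopping rule~\eqref{eq:stopping_rule} is lower bounded by the empirical cost of the particular alternative $\bm\lambda=\bm\mu$:
\[
\sum_a N_a(\tau_\delta)\frac{(\hmu_a(\tau_\delta)-\mu_a)^2}{2}\;\geq\;\inf_{\bm\lambda\in\alt(\hmu(\tau_\delta),\cS)}\sum_a N_a(\tau_\delta)\frac{(\hmu_a(\tau_\delta)-\lambda_a)^2}{2}\;>\;\beta(\tau_\delta,\delta).
\]
It therefore suffices to prove a time-uniform deviation bound
\[
\P_{\bm\mu}\Bigl(\exists t\geq1:\ \sum_a N_a(t)\frac{(\hmu_a(t)-\mu_a)^2}{2}>\beta(t,\delta)\Bigr)\leq\delta,
\]
valid for Gaussian arms, which I would establish by a peeling/union argument over the $K$ coordinates of the self-normalized deviation; the $\loglog$ correction and the value of the constant $C$ are precisely tuned so that the right-hand side equals $\delta$. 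Finiteness of $\tau_\delta$ (hence $\P_{\bm\mu}(\tau_\delta<\infty)=1$) is then a by-product of part~(ii), since the stopping statistic grows linearly in $t$.

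For asymptotic optimality, the forced-exploration mechanism guarantees $N_a(t)\geq\sqrt{t}-K/2$ for every arm and all large $t$, so $\hmu(t)\to\bm\mu$ almost surely; since $\bm\mu\in\cS$ has a unique optimal arm and $\I$ is open, $\hmu(t)$ eventually lies in $\cS$ with $a^*_{\hmu(t)}=a^*_{\bm\mu}$. Using the uniqueness and continuity of the maximizer $\omega^*(\cdot)$ of~\eqref{eq:general_characteristic_time} at $\bm\mu$, the direct-tracking rule forces the empirical proportions to converge, $N_a(t)/t\to\omega^*_a(\bm\mu)$, whence by continuity of the value function
\[
\frac1t\,\inf_{\bm\lambda\in\alt(\hmu(t),\cS)}\sum_a N_a(t)\frac{(\hmu_a(t)-\lambda_a)^2}{2}\longrightarrow T_{\cS}^*(\bm\mu)^{-1}.
\]
Concretely, for each $\eps>0$ one introduces an almost surely finite time $T_\eps$ after which all empirical quantities are within $\eps$ of their limits, and checks $\E_{\bm\mu}[T_\eps]<\infty$. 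For $t\geq T_\eps$ the stopping statistic exceeds $t(1-\eps)T_{\cS}^*(\bm\mu)^{-1}$, while $\beta(t,\delta)=\log(1/\delta)+\o(\log(1/\delta))$, so $\tau_\delta\leq T_\eps+(1+\o(1))\,T_{\cS}^*(\bm\mu)\log(1/\delta)$; dividing by $\log(1/\delta)$, letting $\delta\to0$ and then $\eps\to0$ yields~\eqref{eq:general_ub}.

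The main obstacle is not the template (by now standard) but verifying its analytic prerequisites for the structured sets, especially $\I$: the uniqueness of the optimal weights $\omega^*(\bm\mu)$ and the continuity of $\bm\mu\mapsto\big(T_{\cS}^*(\bm\mu)^{-1},\omega^*(\bm\mu)\big)$ near $\bm\mu$. For $\cS=\I$ the infimum runs over the non-convex, moving set $\alt(\hmu(t),\I)$, and the optimal weights concentrate on the optimal arm and its two neighbours (Proposition~\ref{prop:simplified_general_characteristic_time}); one must argue that this support structure is locally stable near $\bm\mu$ (using $1<a^*_{\bm\mu}<K$) so that continuity of $\omega^*$ survives, and that the statistic computed with the possibly non-increasing $\hmu(t)$ still converges to the correct limit once $\hmu(t)$ enters $\I$. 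The boundary cases $a^*_{\bm\mu}\in\{1,K\}$ and the transient regime where $\hmu(t)\notin\cS$ require the same care as in the two-armed analysis underlying Proposition~\ref{prop:lb_2arm_unknown_gap}.
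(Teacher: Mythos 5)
Your proposal is correct and follows essentially the same route as the paper: the $\delta$-correctness part reduces the error event to a time-uniform self-normalized deviation bound (the paper invokes Theorem~2 of \citet{magureanu2014lipschitz} plus a union bound over $t$, with $C$ tuned exactly as you describe), and the optimality part is the Track-and-Stop argument via forced exploration, continuity of $\omega^*$ (obtained in the paper from Lemma~\ref{lem:alternative_general_input_general}, Proposition~\ref{prop:simplified_general_characteristic_time} and Berge's Maximum theorem), and the tracking lemmas of \citet{KaCaGa16}. You also correctly identify the genuinely new ingredient, namely the continuity/stability of $\omega^*$ on the structured set $\I$, which is exactly where the paper's proof departs from the standard template.
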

The analysis of Algorithm~\ref{alg:general_case} is the same in both the increasing case $\cS=\I$ and the non-monotonic case $\cS=\M$. It is deferred to Appendix~\ref{sec:proofs_asymptotic_otpimality_direct_tracking}.
However, the practical implementations are quite specific to each case, and we detail them in the next section.
\subsection{On the Implementation of Algorithm~\ref{alg:general_case}}
\label{sec:pratical_implementation}
The implementation of Algorithm~\ref{alg:general_case} requires to compute efficiently the optimal weights $w^*(\bm \mu)$ given by Equation~\eqref{eq:def_omega_star}. For the non-monotonic case $\cS=\M$, one can follow the lines of \citet{garivier2016optimal}, Section~2.2 and replace their Lemma~3 by  Lemma~\ref{lem:alternative_general_input_general} above.

In the increasing case $\cS=\I$, however, implementing the algorithm is more involved. It is not sufficient to simply use Proposition~\ref{prop:simplified_general_characteristic_time}, since $\hmu(t)$ is not necessarily in $\I$. Let $\I_b:=\{\bm \lambda\in \I,a^*_{\bm \lambda}=b\}$ be the set of alternatives with $b$ as optimal arm. Noting that the function
\begin{align}
    F : w &\mapsto \inf_{\bm \lambda \in \alt(\bm \mu, \I)} \sum_{a=1}^{K} \omega_a \frac{(\mu_a - \lambda_a)^2}{2} \nonumber\\
    &=\min_{b\neq a^*_{\bm \mu}}\inf_{\bm \lambda \in \I_b} \sum_{a=1}^{K} \omega_a \frac{(\mu_a - \lambda_a)^2}{2}
\label{eq:def_F}
\end{align}
is concave (since it is the infimum of linear functions), one may access to its maximum by a sub-gradient ascent on the probability simplex $\Sigma_K$ (see e.g. \citet{boyd2003subgradient}). Let $\overline{\I}_b$ denote the closure of $\I_b$, and let
\begin{equation}
    \bm \lambda^b:=\argmin_{\lambda\in\overline{\I}_b} \sum_{a=1}^{K} \omega_a \frac{(\mu_a - \lambda_a)^2}{2}
\label{eq:def_lambda_b}
\end{equation}
be the argument of the second infimum in Equation~\eqref{eq:def_F}. The sub-gradient of $F$ at $\omega$ is 
\begin{equation*}
    \partial F(\omega)=\underset{b\in  B_{Opt}}{\conv} \left[\frac{(\mu_a - \lambda^b_a)^2}{2}\right]_{a\in\{1,\cdots,K\}}\,,
\end{equation*}
where $\conv$ denotes the convex hull operator and where $B_{Opt}$ is the set of arms that reach the minimum in \eqref{eq:def_F}.
Thus, performing the sub-gradient ascent simply requires to solve efficiently the minimization program \eqref{eq:def_lambda_b}. It appears that this problem boils down to \emph{unimodal regression} (a problem closely related to isotonic regression, see for example \citet{barlow1973statistical} and \citet{robertson1988orderrestric}). Indeed, we can rewrite the set
\begin{align*}
    &\{\bm \lambda \in \I : a^*_{\bm\lambda}=b\} = \{ \bm\lambda\in \M:\lambda_1<\ldots<\lambda_{b-1}<\\
    &\min(\lambda_b,2S-\lambda_b)\leq \max(\lambda_b,2S-\lambda_b)<\lambda_{b+1}<\ldots <\lambda_K\}\,.
\end{align*}
Assume that $\mu_b\leq S$ (the other case is similar). Then  $\lambda^b_b<S$, since $\lambda_b$ and $2S-\lambda_b$ play a symmetric role in the constraints. Thus, in this case, one may only consider the set 
\begin{align*}
    \{\bm \lambda\in \M: &\lambda_1<\ldots<\lambda_{b-1}<\lambda_b,\\
                     &2S-\lambda_K<\ldots<2S-\lambda_{b+1}<\lambda_b,\\
                     &\lambda_b\leq S\}\,.
\end{align*}
Let $\bm \lambda'$  be the new variables such that 
\begin{equation}
    \lambda_a'=\left\{\begin{array}{ll}
         \lambda_a  &\text{if } 1\leq a\leq b \,, \\
         2S-\lambda_a &\text{else} \,.
    \end{array}\right.
    \label{eq:change_of_variable_iso}
\end{equation}
Then $\bm \lambda^{b\prime}$ is the solution of the following minimization program
\begin{equation}
    \bm \lambda^{b\prime}=\argmin_{\substack{\lambda'_1\leq\ldots\leq\lambda_b'\\ \lambda_K'\leq \ldots\leq \lambda_b'\\ \lambda_b'\leq S}} \sum_{a=1}^{K} \omega_a \frac{(\mu_a' - \lambda_a')^2}{2}\,.
\label{eq:def_lambda_b_prime}
\end{equation}
Thanks to Lemma~\ref{lem:unimodal_regression_with_constraint} in Appendix~\ref{sec:technical}, it holds that
\[
 \lambda^{b'}_a=\min(\hlambda^{b}_a,S)\text{ for all } a\in \{1,\ldots,K\}\,,
\]
where
\begin{equation*}
    \bm \hlambda^{b}:=\argmin_{\substack{\lambda'_1\leq\ldots\leq\lambda_b'\\ \lambda_K'\leq \ldots\leq \lambda_b'}} \sum_{a=1}^{K} \omega_a \frac{(\mu_a' - \lambda_a')^2}{2}\,,
\end{equation*}
is the unimodal regression of $\bm \mu'$ with weights $\omega$ and with a mode located at $b$. It is efficiently computed via isotonic regressions (e.g. \citet{frisen1986unimodal}, \citet{geng1990algorithm}, \citet{mureika1992algorithm}) with a   computational complexity proportional to the number of arms $K$. From $\bm \hlambda^{b}$, one  can go back to $\bm\lambda^b$ by reversing Equation~\eqref{eq:change_of_variable_iso}. Since we need to compute $\bm\lambda^b$ for each $b\neq a^*_{\bm \mu}$, the overall cost of an evaluation of the sub-gradient is proportional to $K^2$.

\subsection{Numerical Experiments}
\label{sec:numerical_exp}
Table~\ref{tab:simu} presents the results of a numerical experiment of an increasing thresholding bandit. In addition to Algorithm~\ref{alg:general_case} (DT), we tried the Best Challenger (BC) algorithm with the finely tuned  stopping rule given by \eqref{eq:stopping_rule}. We also tried the Racing algorithm (R), with the elimination criterion of~\eqref{eq:stopping_rule}.
For a description of all those algorithms, see \citet{garivier2016optimal} and references therein. Finally, in order to allow comparison with the state of the art, we added the sampling rule of algorithm APT (Anytime Parameter-free Thresholding algorithm) from \citet{LocatelliGC16} in combination with the stopping rule~\eqref{eq:stopping_rule}. We chose to set the parameter $\epsilon$ of APT to be roughly equal to a tenth of the gap. It appears that the exploration function $\beta$ prescribed in Theorem~\ref{th:general_ub} is overly pessimistic. On the basis of our experiments, we recommend the use of $\beta(t,\delta)=\log\big(\big(\log(t)+1\big)/\delta\big)$ instead. It does, experimentally, satisfy the $\delta$-correctness property. 
For each algorithm, the final letter in Table~\ref{tab:simu} indicates whether the algorithm is aware ($\I$) or not ($\M$) that the means are increasing.
\begin{table}[h!]
    \centering

    \begin{tabular}{|c|c|c|c|c|c|c|}
        \hline
        & BC-$\M$ & R-$\M$ & DT-$\M$ & APT-$\M$& $T_{\M}^*(\bm \mu) \log\frac{1}{\delta} $\\
        \hline
        1& 3913 & 3609& 4119 & 5960 & 2033 \\
        2 & 3064& 3164 & 3098 & 3672 & 1861  \\
        \hline
        & BC-$\I$ & R-$\I$ & DT-$\I$& APT-$\I$ &$T_{\I}^*(\bm \mu)\log\frac{1}{\delta}$\\
        \hline
        1 & 483 & 494  & 611  & 1127 & 247\\
        2 & 2959 & 2906 & 3072  & 3531 & 1842\\
        \hline
    \end{tabular}    
    
    \caption{Monte-Carlo estimation (with $10000$ repetitions) of the expected number of draws $\E[\tau_\delta]$ for Algorithm~\ref{alg:general_case} and Best Challenger Algorithm in the increasing and non-monotonic cases. Two thresholding bandit problems are considered: bandit problem 1, $\bm \mu_1=[0.5,\, 1.1,\, 1.2,\, 1.3,\,  1.4,\,  5]$ with $S_1=1$, and bandit problem 2, $\bm \mu_2=[1,\,  2,\,  2.5]$ with $S_2=1.55$. The target risk is $\delta=0.1$ (it is approximately reached in the first scenario, while in the second the frequency of errors is of order $1\%$).}
    \label{tab:simu}
\end{table}
We consider two frameworks: in the first one, knowing that the means are increasing provides much information and gives a substantial edge: it permits to spare a large portion of the trials for the same level of risk. In the second, the complexities of the non-monotonic setting is very close to that of the increasing setting.
We chose a value of the risk $\delta$ which is relatively high ($10\%$), in order to illustrate that in this regime, the most important feature for efficiency is a finely tuned stopping rule. This shows that, even without an optimal sampling strategy, the stopping rule of~\eqref{eq:stopping_rule} is a key feature of an efficient procedure.
When the risk goes down to $0$, however, optimality really requires a sampling rule which respects the proportions of Equation~\eqref{eq:def_omega_star}, as shown by Theorem~\ref{th:general_ub}. The poor performances of APT can be explained by the crude adaptation of this algorithm to the fixed confidence setting. This possibly comes from the fact that it was originally designed for the fixed budget setting and it appears that these two frameworks are fundamentally different, as argued by~\citet{carpentier2016tight}.

\section{Closest mean below the threshold}\label{sec:below}

In this section we briefly discuss a variant of the previous problem: finding the arm with the closest mean \emph{below} the threshold, i.e. $a^*_{\bm \mu}\in\argmin_{1\leq a\leq K\,:\, \mu_a\leq S}|\mu_a-S|$, still under the assumption that the means are increasing. Surprisingly this new problem is simpler than the previous one and it is possible to compute exactly, in this case, the optimal weights and the characteristic time.

Let $\bm \mu\in\I$ be a bandit problem such that the optimal arm $a^*_{\bm \mu}$ for this new setting is unique (with $1<a^*_{\bm \mu}<K$ for the sake of clarity). As in Section~\ref{sec:characteristic_time}, we only need to consider alternative bandit problems such that arm $\astar_{\bm \mu}-1$ or $\astar_{\bm \mu}+1$ is optimal. But only one arm needs to be moved: the optimal alternative $\bm \tlambda^-$ (resp. $\bm \tlambda^+$) where $\astar_{\bm \mu}-1$ (resp. $\astar_{\bm \mu}+1$) is optimal are defined by
\begin{equation}
    \label{eq:opt_alt_below}
        \tlambda_a^-=\left\{\begin{array}{ll}
        S & \text{ if } a=a^*_{\bm\mu}\,,\\
        \mu_a  & \text{ else}, \\
    \end{array}\right.
    \qquad \text{and}\qquad 
        \tlambda_a^+=\left\{\begin{array}{ll}
        S  & \text{ if } a=a^*_{\bm\mu}+1\,,\\
        \mu_a  & \text{ else}. \\
\end{array}\right.
\end{equation}
With the same arguments used to prove Proposition~\ref{prop:simplified_general_characteristic_time}, one obtains  that 
\begin{align}
        \label{eq:charact_time_below}
            T_{\I}^*(\bm \mu)^{-1}&=\sup_{\tomega \in\Sigma_2} \min\!\!\left(\tomega^-\frac{(S-\mu_{a_{\bm \mu}^*})^2}{2},\tomega^+ \frac{(\mu_{a_{\bm \mu}^*+1}-S)^2}{2}\right)\nonumber\\
            &= \frac{1}{\dfrac{2}{(S-\mu_{a_{\bm \mu}^*})^2}+\dfrac{2}{(\mu_{a_{\bm \mu}^*+1}-S)^2}}            
            \,,
\end{align}
and that the associated optimal weights $\omega^*(\bm \mu)$ are defined by 
\begin{equation*}
    \label{eq:optimal_weights_below}
        \omega^*(\bm \mu)_a=\left\{\begin{array}{ll}
        \frac{2}{(S-\mu_{a_{\bm \mu}^*})^2}\left(\frac{2}{(S-\mu_{a_{\bm \mu}^*})^2}+\frac{2}{(\mu_{a_{\bm \mu}^*+1}-S)^2}\right)^{-1} & \text{ if } a=a^*_{\bm\mu}\,,\\
        \frac{2}{(\mu_{a_{\bm \mu}^*+1}-S)^2}\left(\frac{2}{(S-\mu_{a_{\bm \mu}^*})^2}+\frac{2}{(\mu_{a_{\bm \mu}^*+1}-S)^2}\right)^{-1} & \text{ if } a=a^*_{\bm\mu}+1\,,\\
        0 & \text{ else}. \\
    \end{array}\right.
\end{equation*}
In some way, this problem is closer to the classical threshold bandit problem  \citep{LocatelliGC16} with two arms since we are testing if the mean of arms $\astar_{\bm\mu}$ and $\astar_{\bm\mu}+1$ is below or above the threshold.

For an optimal strategy, Algorithm~\ref{alg:general_case} can be adapted to this new setting, as well as the Theorem~\ref{th:general_ub} and its asymptotic optimality proof. The only point that needs to be discussed is how to implement this algorithm in practice. One may follow the procedure described in Section~\ref{sec:pratical_implementation}: perform an gradient ascent on the simplex to compute the maximum of $F$ defined in~\eqref{eq:def_F}.  The main difficulty is to compute the sub-gradient of $F$ at $\omega\in\Sigma_K$ and in particular the projection given by~\eqref{eq:def_lambda_b}, that rewrites in this setting
\begin{equation}
    \label{eq:def_lambda_b_below}
        \bm \lambda^{b}=\argmin_{\substack{\lambda'_1\leq\ldots\leq\lambda_b'\leq S\\ S\leq \lambda_{b+1}'\leq \ldots\leq \lambda_K'}} \sum_{a=1}^{K} \omega_a \frac{(\mu_a - \lambda_a')^2}{2}\,.
\end{equation}
But this projection can also be easily computed using two isotonic regressions under bound restrictions, see for example \citet{hu1997maximum}.

\section{Conclusion}
\label{sec:conclusion}
We provided a tight complexity analysis of the \emph{dose-ranging} problem considered as a thresholding bandit problem with, and without, the assumption that the means of the arms are increasing. We proved that, surprisingly, the complexity terms can be computed almost as easily as in the best-arm identification case, despite the important constraints of our setting. We proposed a lower bound on the expected number of draws for any $\delta$-correct algorithm and adapted the  \textit{Direct-Tracking} algorithm to asymptotically reach this lower bound. We also compared the complexities of the non-monotonic and the increasing cases, both in theory and on an illustrative example. 
We showed in Section~\ref{sec:pratical_implementation} how to compute the optimal weights thanks to a sub-gradient ascent in the increasing case, a new and non-trivial task relying on unimodal isotonic regression. 
In order to complement the theoretical results, we presented some numerical experiments involving different strategies in a regime of high risk. 
In fact, despite the asymptotic nature of the results presented here, the procedure proposed here appears to be the most efficient in practice \emph{even when the number of trials implied is rather low} (which is often the case in clinical trials).

In the case where several arms are simultaneously closest to the threshold, the complexity of the problem is infinite. This suggests to extend the results presented here to the PAC setting, where the goal is to find \emph{any $\epsilon$-closest arm} with probability at least $1-\delta$. This extension, and extensions to the non-Gaussian case, are left for future investigation since they induce significant technical difficulties.

As a possibility of improvement, we can also mention the possible use of the unimodal regression algorithm of \citet{stout2000optimal} in order to compute directly~\eqref{eq:def_F} with a complexity of order $O\big(K)$.
We treated here mostly the case of Gaussian distributions with known variance. While the general form of the lower bound may easily be extended to other settings (including Bernoulli observations), the computation of the complexity terms is more involved and requires further investigations (in particular due to heteroscedasticity effects). The asymptotic optimality of Algorithm~\ref{alg:general_case}, however, can be extended directly. It remains important but very challenging tasks to make a tight analysis for moderate values of $\delta$, to measure precisely the sub-optimality of Racing and Best Challenger strategies, and to develop a more simple and yet asymptotically optimal algorithm.


\acks{The authors thank Wouter Koolen for suggesting a key ingredient in the proof of Proposition~\ref{prop:simplified_general_characteristic_time}. The authors acknowledge the support of the French Agence Nationale de la Recherche (ANR), under grants ANR-13-BS01-0005 (project SPADRO) and ANR-13-CORD-0020 (project ALICIA).
 }


\newpage

\appendix
\section{Proofs for the Lower Bounds}
\label{sec:complexity}

\subsection{Expression of the Complexity in the Increasing Case}\label{sec:exprCompIncr}
Fix $\bm\mu\in\I$ and let $a^*$ be the optimal arm $a^*:=a_{\bm \mu}^*$. We recall the definitions of $D^+ (\theta,\tomega)$ and $D^-(\theta,\tomega)$ two functions defined  over $\R\times \Sigma_3 $  by
\begin{align}
\label{def_D_usual_case}
D^+(\theta,\tomega)&=
    \tomega_{-1}\frac{\big(\mu_{a^*-1} - \min(\mu_{a^*-1},\theta)\big)^2}{2}+\tomega_0\frac{(\mu_{a^*} - \theta)^2}{2}+\tomega_{1} \frac{\big(\mu_{a^*+1} - (2 S-\theta)\big)^2}{2}\\
D^-(\theta,\tomega)&=
    \tomega_{-1}\frac{\big(\mu_{a^*-1} -(2 S-\theta) \big)^2}{2}+\tomega_0\frac{(\mu_{a^*} - \theta)^2}{2}+\tomega_{1} \frac{\big(\mu_{a^*+1} - \max(\mu_{a^*+1},\theta)\big)^2}{2}\,,
\end{align}
if $1<a^*<K$. Else, if $a^*=1$ we define
\begin{align*}
D^+(\theta,\tomega)&=
    \tomega_0\frac{(\mu_{a^*} - \theta)^2}{2}+\tomega_{1} \frac{\big(\mu_{a^*+1} - (2 S-\theta)\big)^2}{2}\\
D^-(\theta,\tomega)&=+\infty\,,
\end{align*}
and if $a^*=K$ we define 
\begin{align*}
D^+(\theta,\tomega)&=+\infty\\
D^-(\theta,\tomega)&=
    \tomega_{-1}\frac{\big(\mu_{a^*-1} -(2 S-\theta) \big)^2}{2}+\tomega_0\frac{(\mu_{a^*} - \theta)^2}{2}\,.
\end{align*}

\begin{proof}[of Proposition~\ref{prop:simplified_general_characteristic_time}]
We just treat here the case $1<a^*<K$, the two other limit cases are very similar. We begin by proving that for all $\omega\in\Sigma_K$ 
\begin{equation}
\inf_{\bm \lambda \in \alt(\bm \mu, \cS)} \sum_{a=1}^{K} \omega_a \frac{(\mu_a - \lambda_a)^2}{2}
=\min_{b\in \{a^*-1,a^*+1\}}\inf_{\{\bm \lambda\in\I\ :\ a_{\bm \lambda}^*= b\}} \sum_{a=1}^{K} \omega_a \frac{(\mu_a - \lambda_a)^2}{2} \,.
\label{eq:temp_simga3}
\end{equation}
Indeed, let $\bm \lambda\in \I$ such that $a_{\bm \lambda}^*\notin \{a^*-1,a^*+1\}$. Suppose for example that $a_{\bm \lambda}^*< a^*-1$. Let $\bm \lambda^\alpha$ be the family of bandit problems defined for $\alpha\in [0,1]$ by
\begin{equation*}
\bm \lambda^\alpha= \alpha \bm \lambda+(1-\alpha) \bm \mu\,.
\end{equation*}
For all $\alpha \in [0,1]$, we have $\bm \lambda \in \I$. For $\bm \nu\in \I$ and $a\in\{0,..,K\}$, let $m_a(\bm \nu)=(\nu_a+\nu_{a+1})/2$ be the average of two consecutive means with the convention $m_0(\bm \nu)=-\infty$ and  $m_K(\bm \nu)=+\infty$. As in the case of two arms we have that $a_{\bm \nu}^*=a$ is equivalent to $m_a(\bm \nu)>S$ and $m_a(\bm \nu)<S$. Therefore we have the following inequalities 
\begin{align*}
&m_{a_{\bm \lambda}^*-1}(\bm \mu)<m_{a_{\bm \lambda}^*}(\bm \mu)\leq m_{a^*-2}(\bm \mu)<m_{a^*-1}(\bm \mu)<S<m_{a^*}(\bm \mu)\;\;\text{and}\\
&m_{a_{\bm \lambda}^*-1}(\bm \lambda)<S<m_{a_{\bm \lambda}^*}(\bm \lambda)\leq m_{a^*-2}(\bm \lambda)<m_{a^*-1}(\bm \lambda)<m_{a^*}(\bm \lambda)\,.
\end{align*}
Thus, by continuity of the applications $\alpha\mapsto m_{a}(\bm \lambda^\alpha)$ there exits $\alpha_0\in(0,1)$ such that 
\begin{equation*}
m_{a_{\bm \lambda}^*-1}(\bm \lambda^{\alpha_0})<m_{a_{\bm \lambda}^*}(\bm \lambda^{\alpha_0})\leq m_{a^*-2}(\bm \lambda^{\alpha_0})< S <m_{a^*-1}(\bm \lambda^{\alpha_0})<m_{a^*}(\bm \lambda^{\alpha_0})\,,
\end{equation*}
i.e. $a_{\bm \lambda^{\alpha_0}}^*=a^*-1$. But $\alpha\mapsto \sum_{a=1}^{K} \omega_a \frac{(\mu_a - \lambda_a^{\alpha})^2}{2}$ is an increasing function, and thus
\begin{equation*}
\sum_{a=1}^{K} \omega_a \frac{(\mu_a - \lambda_a^{\alpha_0})^2}{2}<\sum_{a=1}^{K} \omega_a \frac{(\mu_a - \lambda_a)^2}{2}\,.
\end{equation*}
This holds for all $\lambda$, therefore 
\begin{equation*}
    \inf_{\bm \lambda \in \alt(\bm \mu, \cS)} \sum_{a=1}^{K} \omega_a \frac{(\mu_a - \lambda_a)^2}{2}
\geq\min_{b\in \{a^*-1,a^*+1\}}\inf_{\{\bm \lambda\in\I\ :\ a_{\bm \lambda}^*= b\}} \sum_{a=1}^{K} \omega_a \frac{(\mu_a - \lambda_a)^2}{2} \,.
\end{equation*}
The reverse inequality follows from the inclusion 
\[\bigcup_{b\in\{a^*-1,a^*+1\}}\{\bm \lambda\in\I\ :\ a_{\bm \lambda}^*= b\} \subset \alt(\bm \mu,\I) \;.\]

Fix $\omega \in \Sigma_K$ and let $\bm \lambda\in \I$ be such that, say,  $a^*_{\bm \lambda}= a^*+1$ (the other case is similar). Then it implies $\lambda_{a^*}\leq S $ and we can suppose, without loss of generality, that  $\lambda_{a^*} \geq 2S -\mu_{a^*+1}$ since it holds
\begin{equation}
\label{eq:inf_temp_3_arms}
    \inf_{\{\bm \lambda\in\I\ :\ a_{\bm \lambda}^*= a^*+1\}} \sum_{a=1}^{K} \omega_a \frac{(\mu_a - \lambda_a)^2}{2}=\inf_{\{\bm \lambda\in\I\ :\ a_{\bm \lambda}^*= a^*+1,\ \lambda_{a^*} \geq 2S -\mu_{a^*+1}\}} \sum_{a=1}^{K} \omega_a \frac{(\mu_a - \lambda_a)^2}{2}\,.
\end{equation}
Let $\bm \tlambda$ be such that 
\begin{equation*}
    \tlambda_a=\left\{\begin{array}{ll}
        \mu_a  & \text{ if } a>a^*+1 \,, \\
        2S-\lambda_{a^*}  & \text{ if } a=a^*+1\,,\\
        \lambda_{a^*}  & \text{ if } a=a^*\,,\\
        \min(\lambda_{a^*},\mu_a)  & \text{ if } a\leq a^*-1\,.\\ 
    \end{array}\right.
\end{equation*}
By construction we have $\tlambda\in \overline{\{\lambda \in \I : a_{\bm \lambda}^* =a^*+1\}}$. As  $\lambda_{a^*+1}\leq 2S-\lambda_{a^*}$ and $\mu_{a^*+1}\geq 2S-\lambda_{a^*}$ hold, we get  
\begin{equation*}
    (\tlambda_{a^*+1}-\mu_{a^*+1})^2\leq  (\lambda_{a^*+1}-\mu_{a^*+1})^2\,.
\end{equation*}
Similarly, for $a\leq a^*-1 $ we have thanks to the fact that $\lambda_a \leq \lambda_{a^*}$ the inequality
\begin{equation*}
     (\tlambda_{a^*+1}-\mu_{a^*+1})^2\leq  (\lambda_{a^*+1}-\mu_{a^*+1})^2\,.
\end{equation*}
Therefore, combining these two inequalities and using the definition of $\tlambda$, one obtains
\begin{equation*}
 \sum_{a=1}^{K} \omega_a \frac{(\mu_a - \lambda_a)^2}{2}\geq \sum_{a=1}^{K} \omega_a \frac{(\mu_a - \tlambda_a)^2}{2}\,,
\end{equation*}
and we can rewrite the infimum in Equation~\ref{eq:inf_temp_3_arms}, indexing the alternative $\tlambda$ by $\theta$ the mean of arm $a$, as follows:
\begin{align}
    \inf_{\{\bm \lambda\in\I\ :\ a_{\bm \lambda}^*= a^*+1\}} \sum_{a=1}^{K} \omega_a \frac{(\mu_a - \lambda_a)^2}{2}&=\min_{2S-\mu_{a^*+1}\leq \theta \leq S}\sum_{a\leq a^*-1} \omega_a \frac{(\mu_a - \min\big(\theta,\mu_a)\big)^2}{2}\nonumber\\
    &+\omega_{a^*} \frac{(\mu_{a^*} - \theta)^2}{2}+\omega_{a^*+1} \frac{(\mu_{a^*+1} -2S+\theta)^2}{2}\nonumber\\
    &= \min_{2S-\mu_{a^*+1}\leq \theta \leq S} \sum_{a< a^*-1} \omega_a \frac{(\mu_a - \min\big(\theta,\mu_a)\big)^2}{2}\label{eq:sigma3_temp3_2}\\
    & + D^+(\theta,[\omega_{a^*-1},\omega_{a^*},\omega_{a^*+1}])\nonumber\,.
\end{align}
Similarly, if the optimal arm of the alternative is $a^*-1$, we get
\begin{align}
    \inf_{\{\bm \lambda\in\I\ :\ a_{\bm \lambda}^*= a^*-1\}} \sum_{a=1}^{K} \omega_a \frac{(\mu_a - \lambda_a)^2}{2}
    &= \min_{S \leq \theta \leq 2S-\mu_{a^*-1}} \sum_{a> a^*+1} \omega_a \frac{(\mu_a - \max\big(\theta,\mu_a)\big)^2}{2}\label{eq:sigma3_temp3_1}\\
    & + D^-(\theta,[\omega_{a^*-1},\omega_{a^*},\omega_{a^*+1}])\nonumber\,.
\end{align}
Then, by noting that 
\begin{align*}
    (\mu_a - \max\big(\theta,\mu_a)\big)^2 &\leq (\mu_{a^*+1} - \max\big(\theta,\mu_{a^*+1})\big)^2 &\forall a\leq a^*+1 \\
    (\mu_a - \min\big(\theta,\mu_a)\big)^2&\leq (\mu_{a^*-1} - \min\big(\theta,\mu_{a^*-1})\big)^2 &\forall a\leq a^*-1\,
\end{align*}
and by using the new weights $\tomega $ defined by 
\begin{equation*}
    \tomega_a=\left\{\begin{array}{cl}
        \sum_{b\leq a^*-1} w_b & \text{if } a=a^*-1\\
        \omega_a & \text{if } a=a^* \\
        \sum_{b\geq a^*+1}w_b &\text{if } a=a^*+1\\
        0 & \text{else}\,,
    \end{array}\right.
\end{equation*} 
we obtain thanks to Equation~\eqref{eq:temp_simga3} and to the fact that $\tomega$ depends only on $\omega$:
\begin{align}
\inf_{\bm \lambda \in \alt(\bm \mu, \cS)} \sum_{a=1}^{K} \omega_a \frac{(\mu_a - \lambda_a)^2}{2}\leq \min\left(\min_{\{2S-\mu_{a^*+1}\leq\theta\leq S\}} D^+(\theta,\tomega),\, \min_{\{S\leq \theta \leq 2S-\mu_{a^*-1}\}} D^-(\theta,\tomega)\right)\,,
\label{eq:sigma3_temp2}
\end{align}
where we identified $\tomega$ to an element of $\Sigma_3$. Taking the supremum on each side of \eqref{eq:sigma3_temp2}, one obtains:
\begin{equation*}
        T_{\I}^*(\bm \mu)^{-1}\leq\sup_{\tomega \in\Sigma_3}\min\!\!\bigg( \min_{\{2S-\mu_{a^*+1}\leq\theta\leq S\}} D^+(\theta,\tomega),\, \min_{\{S\leq \theta \leq 2S-\mu_{a^*-1}\}} D^-(\theta,\tomega)\bigg)\,.
\end{equation*}
In order to prove the reverse inequality and thus \eqref{eq:simplified_general_characteristic_time}, we just need to use \eqref{eq:sigma3_temp3_1}, \eqref{eq:sigma3_temp3_2} and restrict the weight $\omega$ to have a support included in $\{a^*-1,a^*,a^*+1\}$.
\end{proof}

\begin{proof}[of Proposition~\ref{prop:bounds_characteristic_time}]
We recall the definitions of the gaps: $\Delta_{-1}^2=(2S-\mu_{a^*-1}-\mu_{a^*})^2/8$, $\Delta_{1}^2=(2S-\mu_{a^*+1}-\mu_{a^*})^2/8$ and $\Delta_{0}^2=\min(\Delta_{-1}^2, \Delta_{1}^2)$. For the lower bound we consider the particular weights $\overline{\omega}\in \Sigma_{3}$ defined by 
\begin{equation*}
    \overline{\omega}_{i}=\frac{1/\Delta_i^2}{\sum_{k=-1}^{1}1/\Delta_k^2}\qquad\text{for }-1\leq i\leq 1\,.
\end{equation*}
Thanks to the Proposition~\ref{prop:simplified_general_characteristic_time}, we know that 
\begin{equation*}
    T_{\I}^*(\bm \mu)^{-1}\geq\min\!\!\bigg( \min_{\{2S-\mu_{a^*+1}\leq\theta\leq S\}} D^+(\theta,\overline{\omega}),\, \min_{\{S\leq \theta \leq 2S-\mu_{a^*-1}\}} D^-(\theta,\overline{\omega})\bigg)\,.
\end{equation*}
Then we can lower bound the two terms that appear in the minimum. Indeed, we have, denoting the mean $\overline{\theta}=\overline{\omega_0}\mu_{a^*}+\overline{\omega_1}(2S-\mu_{a^*+1})$,
\begin{align*}
     \min_{\{2S-\mu_{a^*+1}\leq\theta\leq S\}} D^+(\theta,\overline{\omega})&\geq \min_{\{2S-\mu_{a^*+1}\leq\theta\leq S\}} \overline{\omega_0}\frac{(\mu_{a^*} - \theta)^2}{2}+\overline{\omega_{1}} \frac{\big((2S-\mu_{a^*+1}) -\theta)\big)^2}{2}\\
     &=\overline{\omega_0}\frac{(\mu_{a^*} - \overline{\theta})^2}{2}+\overline{\omega_{1}} \frac{\big((2S-\mu_{a^*+1}) -\overline{\theta})\big)^2}{2}\\
     &\geq \min(\overline{\omega_{1}},\overline{\omega_{0}}) \Delta_{1}^2\geq\frac{1}{\sum_{k=-1}^{1}1/\Delta_k^2}\,,\\
\end{align*}
where we used the definition of the weights $\overline{\omega}$ for the last inequality and the fact that either $(\mu_{a^*} - \overline{\theta})^2/2\geq \Delta_1^2$ or $\big((2S-\mu_{a^*+1}) -\overline{\theta})\big)^2/2\geq \Delta_1^2$ since by definition $\overline{\theta}$ belongs to the interval with bounds $2S-\mu_{a^*+1}$ and $\mu_{a^*}$ for the one before. Similarly one can prove the same inequality with the second term:
\begin{equation*}
    \min_{\{S\leq \theta \leq 2S-\mu_{a^*-1}\}} D^-(\theta,\overline{\omega})\geq \frac{1}{\sum_{k=-1}^{1}1/\Delta_k^2}\,,
\end{equation*}
therefore we obtain the lower bound
\begin{equation*}
    T_{\I}^*(\bm \mu)^{-1}\geq \frac{1}{\sum_{k=-1}^{1}1/\Delta_k^2}\,.
\end{equation*}
For the upper bound we just need to choose a particular $\theta$ in order to bound one of the two terms that appears in the expression of $T_{\I}^*(\bm \mu)^{-1}$. Thus, with the choice $\theta_1=(2S-\mu_{a^*+1})/2+\mu_{a^*}/2$, we get 
\begin{align*}
    \min_{\{2S-\mu_{a^*+1}\leq\theta\leq S\}} D^+(\theta,\tomega)&\leq D^+(\theta_1,\tomega)\\
    &\leq (\tomega_{-1}+\tomega_0+\tomega_1)\frac{(2S-\mu_{a^*+1} -\mu_{a^*})^2}{8}=\Delta_1^2\,,
\end{align*}
where we used that $\big(\mu_{a^*-1} - \min(\mu_{a^*-1},\theta_1)\big)^2/2\leq (2S-\mu_{a^*+1} -\mu_{a^*})^2/8$ since $\theta_1$ is at the middle between $2S-\mu_{a^*+1}$ and $\mu_{a^*}$. In the same way with $\theta_{-1}=(2S-\mu_{a^*-1})/2+\mu_{a^*}/2$ one can obtain
\begin{equation*}
    \min_{\{2S-\mu_{a^*+1}\leq\theta\leq S\}}\leq \Delta_{-1}^2\,.
\end{equation*}
Combining these two inequalities with Proposition~\ref{prop:simplified_general_characteristic_time} leads to 
\begin{equation*}
    T_{\I}^*(\bm \mu)^{-1}\leq \Delta_{0}^2\,.
\end{equation*}
\end{proof}
\subsection{Expression of the Complexity in the Non-monotonic Case}
\label{sec:proof_complexity_non_monotonic}
\begin{proof}[of Lemma~\ref{lem:alternative_general_input_general}]
To simplify the notations we note $a^*_{\bm \mu}=a^*$. Thanks to the definition of the characteristic time, we just have to prove that
\[
    \inf_{\bm \lambda \in \alt(\bm \mu, \M)} \sum_{a=1}^{K} \omega_a \frac{(\mu_a - \lambda_a)^2}{2}=
    \min_{b\neq a^*} \frac{\omega_{a^*} \omega_b}{2(\omega_{a^*}+\omega_b)}\min \big((\mu_{a^*}-\mu_b)^2,(2S-\mu_{a^*}-\mu_b)^2\big)\,.
\]

Using that 
\[\mathrm{Alt}(\bm\mu,\M)=\bigcup_{b \neq a^*} \left\{\bm \lambda \in \M : |\lambda_b-S|<|\lambda_{a^*}-S| \right\},\]
one has 
\begin{align*}
    \inf_{\bm \lambda \in \alt(\bm \mu, \cS)} \sum_{a=1}^{K} \omega_a \frac{(\mu_a - \lambda_a)^2}{2}&=\min_{b\neq a^*}\inf_{|\lambda_b-S|<|\lambda_{a^*}-S|}\sum_{a=1}^{K} \omega_a \frac{(\mu_a - \lambda_a)^2}{2}\\
    &=\min_{b\neq a^*}\inf_{|\lambda_b-S|<|\lambda_{a^*}-S|}\omega_{a^*} \frac{(\mu_{a^*} - \lambda_{a^*})^2}{2}+\omega_b \frac{(\mu_b - \lambda_b)^2}{2}\,.
\end{align*}
Since at the infimum it holds $|\lambda_b-S|=|\lambda_{a^*}-S|$, denoting $x=\lambda_b-S$, we have $\lambda_{a^*}-S=x \text{ or } -x$. Therefore, one obtains
\begin{align*}
    \inf_{|\lambda_b-S|<|\lambda_{a^*}-S|}\omega_{a^*} \frac{(\mu_{a^*} - \lambda_{a^*})^2}{2}+\omega_b \frac{(\mu_b - \lambda_b)^2}{2}=\min 
         \!\!\Bigg(\inf_{x} \omega_a^* \frac{(\mu_{a^*} -S-x)^2}{2}+\omega_b \frac{(\mu_b - S-x)^2}{2},  \\
         \inf_{x}\omega_{a^*} \frac{(\mu_{a^*} -S+x)^2}{2}+\omega_b \frac{(\mu_b - S -x)^2}{2}\Bigg)\,.
\end{align*}
Noting that 
\begin{align*}
    \inf_{x} \omega_{a^*} \frac{(\mu_{a^*} -S-x)^2}{2}+\omega_b \frac{(\mu_b - S-x)^2}{2}&= \frac{\omega_{a^*} \omega_b}{2(\omega_{a^*}+\omega_b)}(\mu_{a^*}-\mu_b)^2\,,\\
    \inf_{x} \omega_{a^*} \frac{(\mu_{a^*} -S-x)^2}{2}+\omega_b \frac{(\mu_b - S+x)^2}{2}&=\frac{\omega_{a^*} \omega_b}{2(\omega_{a^*}+\omega_b)}(2S-\mu_{a^*}-\mu_b)^2\,,
\end{align*}
 permits to conclude. 
\end{proof}

\section{Correctness and Asymptotic Optimality of Algorithm~\ref{alg:general_case}}
\label{sec:proofs_asymptotic_otpimality_direct_tracking}

\begin{proof}[of Proposition~\ref{th:general_ub}]
We follow and slightly adapt the proof of Theorem~14 of \citet{KaCaGa16}. We fix a bandit problem $\bm \mu\in\cS$ and the constant
\begin{equation}
    C:= e^{K+1}\left(\frac{2}{K}\right)^{\!\! K} \big(2(3K+2)\big)^{3K} \frac{4}{\log(3)} \,.
    \label{eq:def_C}
\end{equation}
We begin by proving that Algorithm~\ref{alg:general_case} is $\delta$-correctness on $\cS$, then we show that it is asymptotically optimal.

\textbf{$\delta$-correctness on $\cS$}\\
We will prove in the second part of proof that $\tau$ is almost surely finite, confer~\eqref{eq:bound_E_tau}. By definition of $\tau$, the probability that the predicted arm is the wrong one is upper-bounded by
\begin{equation}
    \P_{\bm \mu}(\ha_\tau\neq  a_{\bm \mu}^*)\leq \P_{\bm \mu}\!\!\left( \exists t\in \N^*,\,  \sum_{a=1}^{K} N_a(t) \frac{(\hmu_a(t) - \mu_a)^2}{2}>\beta(t,\delta)\right)\,,
\end{equation}
where we used that $\bm \mu\in \alt\big(\widehat{\bm \mu}(t),\cS\big)$ since $\ha_\tau\neq  a_{\bm \mu}^*$\,.
Using the union bound then Theorem~\ref{th:deviation_magru} (note that $\beta(t,\delta)\geq K+1$ thanks to the choice of $C$ ) we have 
\begin{align*}
    \P_{\bm \mu}(\ha_\tau\neq  a_{\bm \mu}^*)&\leq \sum_{t=1}^{+\infty} \P_{\bm \mu}\!\!\left( \sum_{a=1}^{K} N_a(t) \frac{(\hmu_a(t) - \mu_a)^2}{2}>\beta(t,\delta)\right)\\
    &\leq \sum_{t=1}^{+\infty} e^{K+1}\left(\frac{2}{K}\right)^{\!\!K} \Big( \beta(t,\delta) \big(\ln(t)\beta(t,\delta)+1\big)\Big)^K e^{-\beta(t,\delta)}\\
    &\leq e^{K+1}\left(\frac{2}{K}\right)^{\!\! K} \sum_{t=1}^{+\infty} \frac{\big(2(3K+2)\big)^{3K}}{\log(t C/\delta)^2}\frac{\delta}{t C}\\
    &\leq e^{K+1}\left(\frac{2}{K}\right)^{\!\! K} \big(2(3K+2)\big)^{3K} \sum_{t=1}^{+\infty} \frac{1}{t\log(3 t)^2}\frac{\delta}{C}\\
     &\leq  e^{K+1}\left(\frac{2}{K}\right)^{\!\! K} \big(2(3K+2)\big)^{3K} \frac{2 }{ \log(3) } \frac{ \delta }{ C}\leq \delta\,,
\end{align*}
where in the third inequality we replaced $\beta(t,\delta)$ by its value and used in the fourth inequality, for $C\geq 3$, the following upper-bound
\begin{equation*}
    \sum_{t=1}^{+\infty} \frac{1}{t\log(3t)^2}\leq \frac{1}{\log(3)^2}+ \int_{t=1}^{+\infty} \frac{1}{t \log(3 t)^2}\d t\leq \frac{2}{\log(3)}\,.
\end{equation*}
\textbf{Asymptotic Optimality}\\
We begin by remarking that the function $\mu \rightarrow \omega^*(\mu)$ is continuous on the sets $\cS_b=\{\bm \mu \in \cS:\ a^*_{\bm \mu}=b\}$ for $b\in\{1,\ldots,K\}$. Indeed it is a consequence of Lemma~\ref{lem:alternative_general_input_general} if $\cS=\M$ and Proposition~\ref{prop:simplified_general_characteristic_time} if $\cS=\I$ and the Maximum theorem from \citet{berge1963topological}. Let $\epsilon$ be a real in $(0,1)$. From the continuity of $w^*$ in $\bm \mu$, there exists $\alpha=\alpha(\epsilon)$ such that the neighbourhood of $\bm\mu$:
\[I_\epsilon := [\mu_1 - \alpha,\mu_1 + \alpha] \times \dots \times [\mu_K - \alpha, \mu_K + \alpha]\]
is such that for all $\bm \mu' \in I_\epsilon$, 
\begin{equation*}
    \bm \mu'\in \cS, \quad a_{\bm \mu}^*=a_{\bm \mu' }^*\text{ and} \quad \max_a |w_a^*(\bm \mu') - w^*_a(\bm \mu)| \leq \epsilon\,.
\end{equation*}
Let $T \in \N^*$ and define the typical  event where $\widehat{ \bm \mu}(t)$ is not too far from $\bm\mu$
\[\cE_T(\epsilon)= \bigcap_{t = T^{1/4}}^{T}\left(\widehat{ \bm \mu}(t) \in I_\epsilon \right).\]
The two following Lemmas are extracted from \citet{KaCaGa16}.

\begin{lemma}\label{lem:concSimple}There exists two constants $B,C$ (that depend on $\bm \mu$ and $\epsilon$) such that \[\P_{\bm \mu}(\cE_T^c) \leq B T \exp(-C T^{1/8})\,.\] 
\end{lemma}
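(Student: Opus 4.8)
The plan is to reduce the statement to a union bound over times and arms, exploiting that the forced-exploration device guarantees that every arm has been pulled many times by any time $t\geq T^{1/4}$. Writing out the complement and using that $I_\epsilon$ is the product of the intervals $[\mu_a-\alpha,\mu_a+\alpha]$,
\[
\cE_T^c=\bigcup_{t=T^{1/4}}^{T}\{\widehat{\bm\mu}(t)\notin I_\epsilon\}=\bigcup_{t=T^{1/4}}^{T}\bigcup_{a=1}^{K}\bigl\{|\hmu_a(t)-\mu_a|>\alpha\bigr\}.
\]
A first union bound over the at most $T$ values of $t$ and over the $K$ arms reduces everything to controlling $\P_{\bm\mu}\bigl(|\hmu_a(t)-\mu_a|>\alpha\bigr)$ for a fixed arm $a$ and a fixed time $t\geq T^{1/4}$.

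First I would dispose of the fact that the number of samples $N_a(t)$ is data-dependent, which is the only genuine difficulty: $\hmu_a(t)=\hmu_{a,N_a(t)}$ is a randomly-stopped average. Representing the reward stream in the usual way, for each arm $a$ there is an i.i.d.\ sequence of $\cN(\mu_a,1)$ variables whose $n$-th term is revealed at the $n$-th pull of $a$, so that $\hmu_{a,n}$ is the average of the first $n$ of these and is a sum of i.i.d.\ Gaussians regardless of the adaptive order of play. The forced-exploration rule pins $N_a(t)$ into the deterministic window $h(t)\leq N_a(t)\leq t$, and on $\{N_a(t)=n\}$ one has $\hmu_a(t)=\hmu_{a,n}$; peeling off the value of $N_a(t)$ therefore gives
\[
\bigl\{|\hmu_a(t)-\mu_a|>\alpha\bigr\}\subseteq\bigcup_{n=\lceil h(t)\rceil}^{t}\bigl\{|\hmu_{a,n}-\mu_a|>\alpha\bigr\}.
\]
Applying the Gaussian tail bound $\P_{\bm\mu}\bigl(|\hmu_{a,n}-\mu_a|>\alpha\bigr)\leq 2\exp(-n\alpha^2/2)$ to each term and summing the resulting geometric series yields
\[
\P_{\bm\mu}\bigl(|\hmu_a(t)-\mu_a|>\alpha\bigr)\leq\frac{2}{1-\e^{-\alpha^2/2}}\,\e^{-h(t)\alpha^2/2}.
\]

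To conclude I would use that for every $t\geq T^{1/4}$ one has $\sqrt t\geq T^{1/8}$, hence $h(t)\geq T^{1/8}-K/2$, so each of the $KT$ terms in the double union bound is at most a fixed multiple of $\exp\bigl(-(T^{1/8}-K/2)\alpha^2/2\bigr)$. Collecting the factor $KT$ and absorbing $\e^{K\alpha^2/4}$ and $1/(1-\e^{-\alpha^2/2})$ into a single constant gives $\P_{\bm\mu}(\cE_T^c)\leq B\,T\exp(-C\,T^{1/8})$, with $B,C>0$ depending only on $\bm\mu$ and $\epsilon$ (through $\alpha=\alpha(\epsilon)$); for the few small values of $T$ with $T^{1/8}\leq K/2$ the bound is made trivial by enlarging $B$, since the right-hand side is then bounded below.

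I expect the only delicate point to be the passage from the randomly-stopped empirical mean $\hmu_a(t)$ to the finite union over the i.i.d.\ averages $\hmu_{a,n}$. It works precisely because forced exploration confines $N_a(t)$ to the deterministic interval $[h(t),t]$, turning an a priori infinite and adaptively-indexed union into a finite one on which a plain union bound and i.i.d.\ Gaussian concentration suffice; everything else is a routine union bound and geometric summation.
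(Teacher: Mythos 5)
Your argument is correct and is essentially the proof of this lemma in \citet{KaCaGa16}, from which the paper imports it without reproving it: union bound over $t$ and over arms, peeling of the random sample count $N_a(t)$ over the deterministic window guaranteed by forced exploration, Gaussian concentration of the fixed-sample-size averages $\hmu_{a,n}$, and a geometric summation, with $t\geq T^{1/4}$ converting $h(t)$ into the $T^{1/8}$ exponent. The one step you assert without justification is that forced exploration pins $N_a(t)\geq h(t)$ deterministically; this requires a short inductive argument on the sampling rule (and in fact only yields $N_a(t)\geq(\sqrt{t}-K/2)_+-1$), but the off-by-one is harmlessly absorbed into the constants $B$ and $C$.
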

\begin{lemma}\label{lem:CcqceTracking} There exists a constant $T_\epsilon$ such that for $T \geq T_\epsilon$, it holds that on $\cE_T$, 
 \[\forall t \geq \sqrt{T}, \ \max_a \left|\frac{N_a(t)}{t} - w_a^*(\mu)\right| \leq 2(K-1)\epsilon\]
\end{lemma}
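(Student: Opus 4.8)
The plan is to follow the tracking argument of \citet{KaCaGa16}, isolating the two mechanisms of the sampling rule. On $\cE_T$ and for every $s\in[T^{1/4},T]$ one has $\widehat{\bm\mu}(s)\in I_\epsilon$, hence $\max_a|w_a^*(\widehat{\bm\mu}(s))-w_a^*(\mu)|\le\epsilon$, so the only moving target in the direct-tracking rule is, up to $\epsilon$, the fixed vector $w^*(\mu)$. First I would split $N_a(t)$ into the pulls due to forced exploration (rounds with $U_t\neq\emptyset$) and those due to direct tracking. Since forced exploration always pulls the least-drawn arm among those below $h(t)=(\sqrt{t}-K/2)_+$, an arm leaves $U_t$ as soon as its count reaches $h(t)$, so each arm accumulates at most $O(\sqrt{t})$ forced pulls; dividing by $t$, the forced-exploration contribution to every frequency $N_a(t)/t$ is $O(1/\sqrt{t})$, which is negligible for $t\ge\sqrt{T}$. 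Likewise the pulls made at times $s<T^{1/4}$, where the weights are not controlled, total at most $T^{1/4}$ and contribute a fraction $\le T^{1/4}/t\le T^{-1/4}$.

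The heart of the proof is the self-correcting nature of direct tracking. At a tracking step we pull $\argmax_a\big(t\,w_a^*(\widehat{\bm\mu}(t))-N_a(t)\big)$, i.e. the arm furthest behind its target count $t\,w_a^*(\widehat{\bm\mu}(t))$. Because $\sum_a N_a(t)=t=\sum_a t\,w_a^*(\widehat{\bm\mu}(t))$, the maximum of $t\,w_a^*(\widehat{\bm\mu}(t))-N_a(t)$ is always nonnegative, so any arm that is currently \emph{over} its target is never selected and has its count frozen while $t$ grows. Tracing an arm back to the last time it was pulled (where it was the argmax, hence at or below its target) gives $N_a(t)\le t\,w_a^*(\widehat{\bm\mu}(t))+1$ at that instant; freezing afterwards yields, for all later $t$, the overshoot bound $N_a(t)/t\le w_a^*(\widehat{\bm\mu}(t))+O(1/t)\le w_a^*(\mu)+\epsilon+O(1/t)$ on $\cE_T$. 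Conservation of mass, $\sum_a\big(N_a(t)/t-w_a^*(\mu)\big)=0$, then forces each undershoot to be at most the sum of the other $K-1$ overshoots, i.e. $N_a(t)/t\ge w_a^*(\mu)-(K-1)\big(\epsilon+O(1/t)\big)$.

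Finally I would collect the two one-sided bounds, using for $t\ge\sqrt{T}$ the triangle inequality
\[
\max_a\Big|\frac{N_a(t)}{t}-w_a^*(\mu)\Big|\le \max_a\Big|\frac{N_a(t)}{t}-w_a^*(\widehat{\bm\mu}(t))\Big|+\max_a\big|w_a^*(\widehat{\bm\mu}(t))-w_a^*(\mu)\big|\,,
\]
and absorb the vanishing forced-exploration, edge, and $O(1/t)$ terms (all $o(1)$ uniformly on $[\sqrt T,T]$ as $T\to\infty$) by choosing $T_\epsilon$ large; this upgrades $(K-1)(\epsilon+o(1))$ to the claimed $2(K-1)\epsilon$. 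The main obstacle is the self-correcting estimate of the second paragraph: making ``overshoots cannot grow'' rigorous requires a careful induction over the leader-following dynamics with \emph{time-varying} targets $w^*(\widehat{\bm\mu}(s))$, which is precisely the delicate ingredient borrowed from \citet{KaCaGa16}; the forced-exploration bookkeeping and the closing triangle inequality are routine by comparison.
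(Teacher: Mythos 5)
Your proof is correct and follows essentially the same route as the paper, which in fact does not prove this lemma at all but imports it verbatim from \citet{KaCaGa16}: the three ingredients you use --- the $O(\sqrt{t})$ bound on forced-exploration pulls, the overshoot bound obtained by tracing each arm back to its last tracking pull (where $N_a(s)\le s\,w_a^*(\widehat{\bm\mu}(s))$), and the conversion of overshoots into undershoots via $\sum_a N_a(t)=t$ --- are exactly those of the cited tracking lemma. The only point to make explicit in a full write-up is that an arm sitting above its target is not literally ``frozen'' (forced exploration may still pull it), but any such pull occurs only while $N_a(t)<\sqrt{t}-K/2$, so the resulting overshoot is $O(1/\sqrt{t})$ and is absorbed, together with the $O(1/t)$ and $T^{-1/4}$ edge terms, when you choose $T_\epsilon$ so that $(K-1)\big(\epsilon+o(1)\big)\le 2(K-1)\epsilon$ uniformly over $t\in[\sqrt{T},T]$.
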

We now assume that $T\geq T_\epsilon$. Introducing the constant
\begin{equation*}
C^*_\epsilon(\bm \mu) = \inf_{\substack{\bm \mu' : ||\bm\mu' - \bm \mu|| \leq \alpha(\epsilon) \\ \bm w' : ||\bm w' -{\bm w}^*(\bm \mu)||\leq 2(K-1)\epsilon}} \inf_{\lambda \in \alt\big(\bm \mu',\cS\big)} \sum_{a=1}^{K} w_a \frac{(\mu_a'(t) - \lambda_a)^2}{2}\,,
\end{equation*}
thanks to Lemma~\ref{lem:CcqceTracking}, on the event $\cE_T$ it holds that for every $t \geq \sqrt{T}$, 
\begin{equation}
t \inf_{\lambda \in \alt\big(\widehat{\bm \mu}(t),\cS\big)} \sum_{a=1}^{K} \frac{N_a(t)}{t} \frac{(\hmu_a(t) - \lambda_a)^2}{2} \geq t C^*_\epsilon(\bm \mu)\,.
\label{eq:ineq_C}
\end{equation}
Thus, combining \eqref{eq:ineq_C} and the definition of the stopping rule~\eqref{eq:stopping_rule}, we have on the event $\cE_T$
\begin{eqnarray*}
 \max(\tau_\delta,T) & \leq & \sqrt{T} + \sum_{t=\sqrt{T}}^T \ind_{\left(\tau_\delta > t\right)} \\
 & \leq & \sqrt{T} + \sum_{t=\sqrt{T}}^T \ind_{\big\{t C_\epsilon^*(\bm\mu) \leq \beta(T,\delta)\big\}} \leq \sqrt{T} + \frac{\beta(T,\delta)}{C_\epsilon^*(\bm \mu)}.
\end{eqnarray*}
Introducing 
\[T_0(\delta) = \inf \left\{ T \in \N : \sqrt{T} + \frac{\beta(T,\delta)}{C_\epsilon^*(\bm \mu)} \leq T \right\},\]
for every $T \geq \max (T_0(\delta), T_\epsilon)$, one has $\cE_T \subseteq \{\tau_\delta \leq T\}$, therefore thanks to Lemma~\ref{lem:concSimple}
\[\P_{\bm \mu}\left(\tau_\delta > T\right) \leq \P(\cE_T^c) \leq BT \exp(-C T^{1/8})\]
and
\begin{equation}
 \E_{\bm \mu}[\tau_\delta] \leq T_0(\delta) + T_\epsilon + \sum_{T=1}^\infty BT \exp(-C T^{1/8}).
 \label{eq:bound_E_tau}
\end{equation}
We now provide an upper bound on $T_0(\delta)$.  Introducing the constant 
\[H(\epsilon) = \inf \{ T \in \N : T - \sqrt{T} \geq T/(1+\epsilon)\}\]
one has 
\begin{eqnarray*}
 T_0(\delta) & \leq & H(\epsilon) + \inf \left\{T \in \N : \beta(T,\delta) \leq \frac{C_\epsilon^*(\bm\mu) T}{1+\epsilon}\right\} \\
 & \leq & H(\epsilon) + \inf \left\{T \in \N :  \log(T C/\delta)+(3K+2)\loglog(T C/\delta) \leq \frac{C_\epsilon^*(\bm\mu) T}{1+\epsilon} \right\}\,.
\end{eqnarray*}
Using technical Lemma~\ref{lem:inv_log}, for $\delta$ small enough to have $\big(C_\epsilon^*(\bm\mu)\delta\big)/\big((1+\epsilon)^2 C \big)\leq e$, we get
\begin{align*}
 T_0(\delta) &\leq C(\epsilon) + \frac{\delta}{C} \max\left( g\!\!\left(\frac{C_\epsilon^*(\bm\mu)\delta}{(1+\epsilon)^2 C}\right),\exp\!\!\left(g\!\!\left(\frac{\epsilon}{3K+2}\right) \right)\right)\\
 &\leq C(\epsilon) + \max\left( \frac{(1+\epsilon)^2 }{C_\epsilon^*(\bm\mu)}\log\!\!\left(\frac{e(1+\epsilon)^2 C}{C_\epsilon^*(\bm\mu)\delta}\log\!\!\left(\frac{ (1+\epsilon)^2 C}{C_\epsilon^*(\bm\mu)\delta}\right) \right),\frac{\delta}{C}\exp\!\!\left(g\!\!\left(\frac{\epsilon}{3K+2}\right) \right)\right)\,.
\end{align*}
This last upper bound yields, for every $\epsilon>0$, 
\[\limsup_{\delta \rightarrow 0} \frac{\E_{\bm \mu}[\tau_\delta]}{\log(1/\delta)} \leq \frac{(1+\epsilon)^2}{C_\epsilon^*(\bm \mu)}.\]
Letting $\epsilon$ tend to zero and by definition of $w^*$,
\[\lim_{\epsilon \rightarrow 0} C_\epsilon^*(\bm \mu) = T_{\cS}^*(\bm \mu)^{-1}\,,\]
allows us to conclude 
\[\limsup_{\delta \rightarrow 0} \frac{\E_{\bm \mu}[\tau_\delta]}{\log(1/\delta)} \leq T_{\cS}^*(\bm \mu)\;.\]
\end{proof}

\section{Some Technical Lemmas} \label{app:technical}\label{sec:technical}
We regroup in this Appendix some technical lemmas used in the asymptotic analysis of Algorithm~\ref{alg:general_case}.
\subsection{An Inequality}
For $0<y\leq 1/e$ let $g$ be the function 
\begin{equation}
    g(y)=\frac{1}{y}\ln\!\!\left(\frac{e}{y}\ln\!\!\left(\frac{1}{y}\right)\right)\,.
    \label{eq:def_g}
\end{equation}

\begin{lemma}
Let $A>0$ such that $1/A>e$, then for all $x\geq g(A)$
\begin{equation}
    \log(x)\leq A x\,.
\label{eq:inv_log_v0}
\end{equation}
\label{lem:inv_log_v0}
\end{lemma}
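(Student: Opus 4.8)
The plan is to study the function $h(x) = A x - \log x$ and to show it is nonnegative on $[g(A),\infty)$. Since $h'(x) = A - 1/x$, the function $h$ is strictly decreasing on $(0,1/A)$ and strictly increasing on $(1/A,\infty)$, with a unique minimum at $x = 1/A$. It therefore suffices to establish two facts: (i) $g(A) \geq 1/A$, and (ii) $h(g(A)) \geq 0$. Granting these, every $x \geq g(A)$ lies in the region where $h$ is increasing, so $h(x) \geq h(g(A)) \geq 0$, which is exactly the claim $\log x \leq A x$.

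For both facts I would first record the identity obtained directly from the definition of $g$, namely $A\,g(A) = \log\!\left(\frac{e}{A}\log\frac1A\right)$. Introducing the substitution $u = \log(1/A)$, which satisfies $u > 1$ because of the hypothesis $1/A > e$, and using $1/A = e^{u}$, the right-hand side simplifies to $\log\!\left(u\,e^{u+1}\right) = u + 1 + \log u$, so that $A\,g(A) = u + 1 + \log u$. Fact (i) is then immediate: $g(A) \geq 1/A$ is equivalent to $A\,g(A) \geq 1$, i.e. to $u + \log u \geq 0$, which clearly holds for $u > 1$.

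The heart of the argument is fact (ii). Because $g(A) > 0$ and $\frac{e}{A}\log\frac1A > 0$, monotonicity of $\log$ lets me rewrite $\log(g(A)) \leq A\,g(A) = \log\!\left(\frac{e}{A}\log\frac1A\right)$ as $g(A) \leq \frac{e}{A}\log\frac1A$; multiplying by $A > 0$ and substituting the definition of $g(A)$ turns this into $\log\!\left(\frac{e}{A}\log\frac1A\right) \leq e\log\frac1A$, which in the variable $u$ reads $u + 1 + \log u \leq e\,u$, that is
\[
\log u + 1 \leq (e-1)\,u \qquad \text{for } u > 1 \,.
\]
This elementary inequality I would verify by setting $\phi(u) = (e-1)u - \log u - 1$ and observing that $\phi(1) = e - 2 > 0$ together with $\phi'(u) = (e-1) - 1/u > 0$ for $u \geq 1$, whence $\phi > 0$ on $[1,\infty)$.

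I expect the only delicate point to be the bookkeeping of the chain of equivalences in fact (ii)---in particular justifying that $\log$ may be stripped from both sides (which requires checking positivity of the two arguments) and keeping the direction of each inequality correct under multiplication by $A$. Once the problem has been reduced to $\log u + 1 \leq (e-1)u$, the remainder is a one-line monotonicity check, and the substitution $u = \log(1/A)$ is the device that makes all the logarithms collapse cleanly.
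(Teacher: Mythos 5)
Your proof is correct and follows essentially the same route as the paper: reduce to the single point $x=g(A)$ using the monotonicity of $x\mapsto Ax-\log x$ on $[1/A,\infty)$ (which requires $g(A)\geq 1/A$), then verify $\log(g(A))\leq A\,g(A)$ by comparing $g(A)$ with $\frac{e}{A}\log\frac{1}{A}$. Your version is in fact more careful than the paper's: you actually prove the claim $g(A)\geq 1/A$ (which the paper only asserts), and your substitution $u=\log(1/A)$ reduces the endpoint check to the transparent inequality $\log u + 1\leq (e-1)u$, whereas the paper's chain of bounds invoking $\log x\leq x/e$ is terse and harder to verify as stated.
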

\begin{proof}
Since $g(A)\geq 1/A$, the function $x\mapsto A-1/x$ is non-decreasing, we just need to prove \eqref{eq:inv_log_v0} for $x=g(A)$. It remains to remark that 
\begin{align*}
    \log\big(g(A)\big)&\leq \log\!\!\left(\frac{2}{A}\log\!\!\left(\frac{1}{A}\right)\right)\\
    &\leq \log\!\!\left(\frac{e}{A}\log\!\!\left(\frac{1}{A}\right)\right)=A\,g(A)\,,
\end{align*}
as $\log(x)\leq x/e$.
\end{proof}

\begin{lemma}
Let  $A,\,B>0$, then for all $\epsilon\in (0,1)$ such that $(1+\epsilon)/A<e$ and $B/\epsilon>e$, for all $x\geq \max\Big(g\big(A/(1+\epsilon)\big),\exp\big(g(\epsilon/B)\big)\Big)$ 
\begin{equation}
    \log(x)+B \loglog(x) \leq A x\,.
\label{eq:inv_log}
\end{equation}
\label{lem:inv_log}
\end{lemma}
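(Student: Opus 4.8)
The plan is to reduce the claim to the single-logarithm inequality of Lemma~\ref{lem:inv_log_v0}, applied twice with two carefully chosen constants. The guiding observation is the trivial factorization $Ax=(1+\epsilon)\,\tfrac{A}{1+\epsilon}\,x$: it suffices to establish the two bounds $\log(x)+B\loglog(x)\leq(1+\epsilon)\log(x)$ and $\log(x)\leq \tfrac{A}{1+\epsilon}\,x$, and then chain them. The first bound handles the iterated logarithm, the second the plain logarithm.

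First I would dispose of the iterated logarithm. I apply Lemma~\ref{lem:inv_log_v0} with the constant $\epsilon/B$ — which is admissible since the hypothesis $B/\epsilon>e$ is precisely $1/(\epsilon/B)>e$ — but to the argument $u=\log(x)$ rather than to $x$ itself. Because the lower bound on $x$ contains the term $\exp\big(g(\epsilon/B)\big)$, we have $\log(x)\geq g(\epsilon/B)$, so the lemma yields $\loglog(x)\leq \tfrac{\epsilon}{B}\log(x)$. Multiplying by $B$ and adding $\log(x)$ gives
\[
\log(x)+B\loglog(x)\leq (1+\epsilon)\log(x)\,.
\]
(These same thresholds force $x>1$, so that $\loglog(x)$ is well defined throughout.)

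Next I would bound $\log(x)$ itself, applying Lemma~\ref{lem:inv_log_v0} with the constant $A/(1+\epsilon)$. Its admissibility condition reads $1/\big(A/(1+\epsilon)\big)=(1+\epsilon)/A>e$; this is exactly what makes $g\big(A/(1+\epsilon)\big)$ lie in the domain $(0,1/e]$ of $g$, hence the intended standing assumption. Since the lower bound on $x$ also contains $g\big(A/(1+\epsilon)\big)$, the lemma gives $\log(x)\leq \tfrac{A}{1+\epsilon}\,x$, and combining the two displays,
\[
\log(x)+B\loglog(x)\leq (1+\epsilon)\log(x)\leq (1+\epsilon)\,\frac{A}{1+\epsilon}\,x=Ax\,,
\]
which is the claim.

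There is no genuine obstacle here beyond bookkeeping: the entire content is the clean two-step reduction above. The only point that deserves care is the matching of hypotheses and thresholds. The two admissibility requirements of Lemma~\ref{lem:inv_log_v0} become precisely $B/\epsilon>e$ and $(1+\epsilon)/A>e$, and the two thresholds $\exp\big(g(\epsilon/B)\big)$ and $g\big(A/(1+\epsilon)\big)$ are exactly what force the maximum in the hypothesis to appear — the exponential being present in the first one because there the lemma is applied to $\log(x)$ instead of to $x$.
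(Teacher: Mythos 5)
Your proof is correct and follows essentially the same route as the paper's: apply Lemma~\ref{lem:inv_log_v0} once to $\log(x)$ with constant $\epsilon/B$ to get $B\loglog(x)\leq\epsilon\log(x)$, and once to $x$ with constant $A/(1+\epsilon)$ to get $\log(x)\leq \tfrac{A}{1+\epsilon}x$, then chain. You are also right that the admissibility condition for the second application is $(1+\epsilon)/A>e$ (so that $A/(1+\epsilon)$ lies in the domain of $g$); the inequality as printed in the statement is a sign typo.
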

\begin{proof}
Since $\log(x)\geq g(\epsilon/B)$ thanks to Lemma~\ref{lem:inv_log_v0} we have $B\loglog(x)\leq \epsilon \log(x)$. Therefore , still using  Lemma~\ref{lem:inv_log_v0} with $x\geq g\big(A/(1+\epsilon)\big)$,
\begin{align*}
    \log(x)+B \loglog(x)&\leq (1+\epsilon)\log(x)\\
    &\leq A x\,.
\end{align*}
\end{proof}

\subsubsection{A Deviation Bound}
We recall here for self-containment the Theorem 2 of \citet{magureanu2014lipschitz}. 
\begin{theorem}
For all $\delta\geq (K+1)$ and $t\in\N^*$ we have
\begin{equation}
    \P\!\!\left(\sum_{a=1}^K N_a(t)\frac{(\hmu_a(t)-\mu_a)^2}{2}\geq \delta \right) \leq e^{K+1} \left(\frac{2\delta(\delta\log(t)+1)}{K}\right)^K e^{-\delta}\,. 
\label{eq:deviation_magru}
\end{equation}
\label{th:deviation_magru}
\end{theorem}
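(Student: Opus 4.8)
The quantity $\sum_{a=1}^K N_a(t)\,(\hmu_a(t)-\mu_a)^2/2$ is exactly a maximized Gaussian log-likelihood ratio. Writing $S_a(n)=\sum_{i=1}^n (Y^{(a)}_i-\mu_a)$ for the centered partial sums of arm $a$, so that $N_a(t)(\hmu_a(t)-\mu_a)=S_a(N_a(t))$, one checks that for every fixed $\bm\lambda\in\R^K$ the exponent $\sum_a\big(\lambda_a S_a(N_a(t))-N_a(t)\lambda_a^2/2\big)$ is maximized over $\bm\lambda$ precisely at $\lambda_a=\hmu_a(t)-\mu_a$, with maximal value equal to the statistic of interest. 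The plan is to control this maximized exponent by the \emph{method of mixtures}. The crucial structural fact is that, since exactly one arm is sampled per round, for every fixed $\bm\lambda$ the process $t\mapsto \prod_{a=1}^K\exp\big(\lambda_a S_a(N_a(t))-N_a(t)\lambda_a^2/2\big)$ is a nonnegative supermartingale with respect to $(\F_t)$ started at $1$.

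First I would integrate this supermartingale against a product prior with independent $\cN(0,c)$ coordinates on $\bm\lambda$; by Tonelli the mixture is again a nonnegative supermartingale started at $1$, so Ville's inequality bounds by $1/x$ the probability that it ever exceeds a level $x$. Computing the one-dimensional Gaussian integrals explicitly turns the mixture into $\prod_a (1+c\,N_a(t))^{-1/2}\exp\big(S_a(N_a(t))^2/(2(N_a(t)+1/c))\big)$; taking logarithms shows that off a low-probability event the statistic exceeds the mixture log-value by at most a correction of order $\tfrac12\sum_a\log(1+c\,N_a(t))$. Since $N_a(t)\leq t$, this correction is at most $\tfrac{K}{2}\log(1+ct)$, and after exponentiation it is this bound that produces the $\big(\cdots\log(t)\cdots\big)^K$ prefactor; tuning the prior scale $c$ as a function of $\delta$ and $K$ and collecting numerical constants is what yields the explicit form $e^{K+1}\big(2\delta(\delta\log(t)+1)/K\big)^K e^{-\delta}$.

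The main obstacle is that the sample sizes $N_a(t)$ are \emph{data dependent} and coupled across arms through the sampling rule, so one cannot treat $\hmu_a(t)$ as an average of a fixed number of i.i.d.\ variables and union-bound over arms; it is precisely the supermartingale property (one increment per round) that legitimizes the mixture argument uniformly over the random counts. The remaining delicate point is the bookkeeping in the Gaussian-integral step: one must track the $(1+cN_a(t))^{-1/2}$ factors and the discrepancy between $N_a(t)+1/c$ and $N_a(t)$ in the exponent, and optimize the prior variance, so that the polynomial-in-$\delta$ and $\log(t)$ terms combine into exactly the stated constant rather than a looser one. An alternative route, avoiding the explicit mixture, is a geometric \emph{peeling} over the values $N_a(t)\in\{1,\dots,t\}$: on each dyadic block the count is essentially constant, a standard Chernoff bound for a fixed-size Gaussian average applies, and summing the $\sim\log(t)$ blocks against the $e^{-\delta}$ weight reproduces the same $(\delta\log(t)+1)$ factor; this is the form in which \citet{magureanu2014lipschitz} originally present the result.
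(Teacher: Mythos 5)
First, a point of reference: the paper does not prove this statement at all --- it is imported verbatim (up to a factor $2$) from Theorem~2 of \citet{magureanu2014lipschitz}, and the only original remark is that the factor $2$ accounts for two-sided deviations of the Gaussian means. So your task was really to reconstruct the proof from that reference, and your \emph{fallback} route (geometric peeling over the random counts $N_a(t)$, a Doob/Chernoff bound on each cell of the grid, and a product over the $K$ arms) is indeed the argument used there; it is the only part of your proposal that actually delivers the stated prefactor, and you only gesture at it in one sentence.

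Your primary route, the Gaussian method of mixtures, has a genuine quantitative gap: it cannot produce the bound as stated. Integrating the exponential martingale against a product of $\cN(0,c)$ priors gives, as you compute, the process $\prod_a (1+cN_a(t))^{-1/2}\exp\bigl(S_a(N_a(t))^2/(2(N_a(t)+1/c))\bigr)$, and Ville's inequality then yields, on an event of probability at least $1-e^{-u}$, the estimate $\sum_a S_a^2/(2N_a) \leq (1+1/c)\bigl(u+\tfrac{K}{2}\log(1+ct)\bigr)$. Solving for $u$ at level $\delta$ gives a tail bound of order $e^{-\delta/(1+1/c)}(1+ct)^{K/2}$, i.e.\ a prefactor \emph{polynomial} in $t$ of degree $K/2$, not the $(\delta\log(t)+1)^K$ of the statement; no choice of the scalar prior variance $c$ repairs this, because a single Gaussian prior cannot adapt simultaneously to all possible magnitudes of $N_a(t)\in\{1,\dots,t\}$. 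Recovering a logarithmic dependence in $t$ forces a mixture over scales (a discrete prior with about $\log t$ components per arm), which is exactly the peeling argument in disguise. There is also a smaller unaddressed step in your sketch: passing from the exponent $S_a^2/(2(N_a+1/c))$ back to the target statistic $S_a^2/(2N_a)$ costs a multiplicative $(1+1/(cN_a))$ that must be absorbed into $\delta$, which is where the degradation from $e^{-\delta}$ to $e^{-\delta/(1+1/c)}$ comes from. If you want a complete proof, develop the peeling route and track the constants; the mixture route should be presented, at best, as yielding a different (and in $t$ much weaker) inequality.
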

The factor 2 that differs from Theorem 2 of \citet{magureanu2014lipschitz} comes from the fact that we consider deviation at the right and left of the mean. 
\subsubsection{Unimodal Regression under Bound Restriction}
For $\bm \mu\in \M$, $\omega \in \mathring{\Sigma}_K$ (where $\mathring{\Sigma}_K$ stands for the interior of $\Sigma_K$) and $b\in \{1,\cdots,K\}$, let $\cU$ be the set of unimodal vector with maximum localized at $b$ 
\begin{equation}
    \label{eq:def_U}
    \cU=\{\bm\lambda : \lambda_1\leq \cdots \leq \lambda_b\geq\lambda_{b+1}\geq  \cdots\lambda_K\}\,,
\end{equation}
and $\cU_S$ be the same set with an additional bound restriction on $\lambda_b$
\begin{equation}
    \label{eq:def_U_S}
    \cU_S=\{\bm \lambda: \lambda_1\leq \cdots \leq \lambda_b\geq\lambda_{b+1}\geq  \cdots\lambda_K,\, \lambda_b\leq S\}\,.
\end{equation}
Let $\bm \hlambda$ be the unimodal regression of $\bm \mu$
\begin{equation}
    \label{eq:def_hlambda}
    \bm \hlambda:=\argmin_{\bm \lambda\in \cU} \sum_{a=1}^{K} \omega_a \frac{(\mu_a - \lambda_a)^2}{2}\,,
\end{equation}
and $\bm \lambdaStar$ be the projection of $\bm \mu $ on $\cU_S$
\begin{equation}
    \label{eq:def_lambda_star}
    \bm \lambdaStar:=\argmin_{\bm \lambda\in \cU_S} \sum_{a=1}^{K} \omega_a \frac{(\mu_a - \lambda_a)^2}{2}\,.
\end{equation}
We have, as in the case of isotonic regression (see \citet{hu1997maximum}), the following simple relation between $\bm \lambdaStar$ and $\bm \hlambda$
\begin{lemma}
\label{lem:unimodal_regression_with_constraint}
It holds that
\begin{equation*}
    \lambdaStar_a=\min(\hlambda_a,S )\text{ for all } a\in \{1,\ldots,K\}\,.
\end{equation*}
\end{lemma}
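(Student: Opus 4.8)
The plan is to identify $\bm\lambdaStar$ via the variational characterisation of the weighted Euclidean projection onto a closed convex set. Equipping $\R^K$ with the inner product $\langle x,y\rangle_\omega=\sum_{a=1}^K\omega_a x_a y_a$, the point $\bm\lambdaStar=\Pi_{\cU_S}(\bm\mu)$ is the unique element of $\cU_S$ such that $\langle \bm\mu-\bm\lambdaStar,\bm\lambda-\bm\lambdaStar\rangle_\omega\leq 0$ for every $\bm\lambda\in\cU_S$. I will check that the candidate $\tilde{\bm\lambda}$ with $\tilde\lambda_a=\min(\hlambda_a,S)$ satisfies this inequality; the uniqueness of the projection then yields $\lambdaStar_a=\min(\hlambda_a,S)$, which is exactly the claim.

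First I would verify that $\tilde{\bm\lambda}\in\cU_S$. Applying the nondecreasing map $t\mapsto\min(t,S)$ coordinatewise to the unimodal vector $\bm\hlambda$ preserves both monotone arms and the mode location $b$, and $\tilde\lambda_b=\min(\hlambda_b,S)\leq S$, so $\tilde{\bm\lambda}\in\cU_S$. A helpful observation is that inside $\cU$ the single constraint $\lambda_b\leq S$ is equivalent to $\lambda_a\leq S$ for all $a$, because $\lambda_b$ is the largest coordinate of a unimodal vector with mode $b$; in particular every $\bm\lambda\in\cU_S$ satisfies $\lambda_a\leq S$ for all $a$, a fact used repeatedly below.

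The heart of the proof splits the indices along $A:=\{a:\hlambda_a>S\}$, where $\tilde\lambda_a=S$, and its complement, where $\tilde\lambda_a=\hlambda_a$. Expanding $\langle\bm\mu-\tilde{\bm\lambda},\bm\lambda-\tilde{\bm\lambda}\rangle_\omega$ and subtracting the variational inequality already available for the unconstrained unimodal regression $\bm\hlambda=\Pi_\cU(\bm\mu)$ (which holds for every $\bm\lambda\in\cU_S\subseteq\cU$), the expression reduces to that nonpositive quantity plus the correction
\[
D=\sum_{a\in A}\omega_a(\hlambda_a-S)\big[(\mu_a-\hlambda_a)+(\lambda_a-S)\big].
\]
The part $\sum_{a\in A}\omega_a(\hlambda_a-S)(\lambda_a-S)$ is nonpositive, since $\hlambda_a-S>0$ on $A$ while $\lambda_a-S\leq 0$ for $\bm\lambda\in\cU_S$. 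It remains to show that the other part $\sum_{a\in A}\omega_a(\hlambda_a-S)(\mu_a-\hlambda_a)$ vanishes, and this is the step I expect to be the main obstacle.

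To settle it I would invoke the level-set structure of order-restricted least squares (see \citet{robertson1988orderrestric}): the regression $\bm\hlambda$ is constant on each of its solution blocks, with common value the $\omega$-weighted mean of $\bm\mu$ over that block, so that $\sum_{a\in B}\omega_a(\mu_a-\hlambda_a)=0$ for every block $B$. Since $A$ is a level set of $\bm\hlambda$ it is a union of complete blocks, and $\hlambda_a-S$ is constant on each; hence $\sum_{a\in A}\omega_a(\hlambda_a-S)(\mu_a-\hlambda_a)=\sum_{B\subseteq A}(c_B-S)\sum_{a\in B}\omega_a(\mu_a-\hlambda_a)=0$. Therefore $D\leq 0$ and the target variational inequality holds, giving $\lambdaStar_a=\min(\hlambda_a,S)$. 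The genuinely delicate point is the pooled-mean property at the mode block, where the two monotone constraints meet; I would justify it through the Karush--Kuhn--Tucker stationarity conditions for projection onto this umbrella order, in the spirit of the isotonic argument of \citet{hu1997maximum}.
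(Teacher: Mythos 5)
Your proposal is correct and follows essentially the same route as the paper: both verify the variational inequality characterizing the projection onto the closed convex set $\cU_S$, split the coordinates according to whether $\hlambda_a>S$, and use the fact that the unimodal regression $\bm\hlambda$ averages $\bm\mu$ (with weights $\omega$) over each of its constancy blocks so that $\sum_{a\in B}\omega_a(\mu_a-\hlambda_a)=0$. The only (harmless) difference is that you absorb the remaining terms into the global inequality $\langle\bm\mu-\bm\hlambda,\bm\lambda-\bm\hlambda\rangle_\omega\leq 0$ for $\bm\lambda\in\cU_S\subseteq\cU$, where the paper instead proves and uses a blockwise version of that inequality (its Lemma~\ref{lem:properties_hlambda}); the block-mean property you flag as delicate is exactly what the paper establishes by a local perturbation argument, and your KKT/isotonic route would work just as well.
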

To prove Lemma~\ref{lem:unimodal_regression_with_constraint} we need the following properties on $\bm \hlambda$.
\begin{lemma}
Let $c_{-k}<\ldots<c_0>\ldots>c_l$ be real numbers and  $(A_{-k},\ldots,A_{0},\ldots,A_{k})$ be  integer intervals forming a partition of $\{1,\ldots,K\}$ be such that $\bm \hlambda$ is constant on the sets $A_i$ equals to $c_i$ for all \,
$-k\leq i \leq l$ and $b\in A_0$. Then, for all \,
$-k\leq i \leq l$ and $\bm \lambda\in\cU$
\begin{align}
\label{eq:properties_hlambda_1}
    \sum_{a\in A_i} (\mu_a -\hlambda_a)\omega_a &=0\\
    \sum_{a\in A_i} (\mu_a -\hlambda_a)\omega_a \lambda_a &\leq 0\,.
\label{eq:properties_hlambda_2}
\end{align}
\label{lem:properties_hlambda}
\end{lemma}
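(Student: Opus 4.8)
The plan is to read both identities off the first-order optimality of $\bm\hlambda$ as the minimizer of the strictly convex quadratic $\lambda\mapsto\sum_a\omega_a(\mu_a-\lambda_a)^2/2$ over the polyhedron $\cU$, by testing this minimality against perturbations supported on a single level block $A_i=\{p,\dots,q\}$ (on which $\bm\hlambda\equiv c_i$). Since $\omega\in\mathring\Sigma_K$ the objective is separable with strictly positive weights, so such a perturbation changes only the terms indexed by $A_i$, and the strict ordering $c_{-k}<\dots<c_0>\dots>c_l$ guarantees that small shifts keep the vector inside $\cU$.

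First I would establish \eqref{eq:properties_hlambda_1}. Shifting the whole block $A_i$ by $\pm\epsilon$ is feasible for small $\epsilon$, because $c_i$ is strictly separated from its neighbouring block values (and the peak value $c_0$ carries no upper constraint in $\cU$). The resulting cost increment is $\mp\epsilon\sum_{a\in A_i}\omega_a(\mu_a-c_i)+O(\epsilon^2)\ge0$; letting $\epsilon\to0^+$ in both directions forces $\sum_{a\in A_i}(\mu_a-\hlambda_a)\omega_a=0$. Equivalently, $c_i$ is the $\omega$-weighted average of $\bm\mu$ over the block, which is \eqref{eq:properties_hlambda_1}.

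The heart of the proof is a sign control on the partial sums $T_a:=\sum_{j=p}^{a}\omega_j(\mu_j-c_i)$ inside each block. For an index $a$ lying strictly to the left of the mode $b$, lowering the prefix $\{p,\dots,a\}$ of its block by $\epsilon$ stays in $\cU$ (the left stretch is non-decreasing), and minimality gives $T_a\ge0$; for $a\ge b$, lowering the suffix $\{a+1,\dots,q\}$ by $\epsilon$ stays in $\cU$ (the right stretch is non-increasing), and minimality gives $T_a\le0$. Combined with $T_q=0$ from \eqref{eq:properties_hlambda_1}, this yields $T_a\ge0$ whenever $a<b$ and $T_a\le0$ whenever $b\le a<q$.

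Finally I would deduce \eqref{eq:properties_hlambda_2} by Abel summation on each block: for $\bm\lambda\in\cU$,
\[\sum_{a\in A_i}(\mu_a-\hlambda_a)\omega_a\lambda_a=\sum_{a=p}^{q-1}T_a(\lambda_a-\lambda_{a+1})+T_q\lambda_q,\]
with $T_q=0$. As $\bm\lambda$ is unimodal with peak at $b$, one has $\lambda_a-\lambda_{a+1}\le0$ for $a<b$ and $\lambda_a-\lambda_{a+1}\ge0$ for $a\ge b$, matching the signs of $T_a$ above, so every summand is $\le0$. The delicate point — and the main obstacle — is the mode block $A_0$ containing $b$: there the feasible perturbation direction, and hence the sign of $T_a$, flips exactly at $b$, and the argument closes only because this flip happens in lockstep with the sign change of $\lambda_a-\lambda_{a+1}$ dictated by unimodality. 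For the purely increasing blocks ($i<0$) and purely decreasing blocks ($i>0$) only one regime occurs and the sign matching is immediate.
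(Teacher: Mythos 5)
Your proof is correct, but for the inequality \eqref{eq:properties_hlambda_2} it takes a genuinely different route from the paper. The paper proves both claims from the single variational inequality $\sum_a(\mu_a-\hlambda_a)(\hlambda_a-\lambda_a)\omega_a\geq 0$ characterizing the projection onto the closed convex set $\cU$: for \eqref{eq:properties_hlambda_1} it shifts a whole block by $\pm\epsilon$ (as you do), and for \eqref{eq:properties_hlambda_2} it plugs in a cleverly chosen test point $\bm\lambda'$ that agrees with $\bm\lambda$ on $A_i$ and is flattened to the constants $\lambda_x$ and $\lambda_y$ outside the block, so that all contributions from the other blocks vanish by \eqref{eq:properties_hlambda_1} and the inequality drops out in two lines. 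You instead derive the classical dual characterization of the unimodal regression inside each block --- the one-sided prefix/suffix perturbations giving $T_a\geq 0$ to the left of the mode and $T_a\leq 0$ at and to the right of it, with $T_q=0$ --- and close by Abel summation against the monotonicity pattern of $\bm\lambda$. Your sign bookkeeping at the mode block is the delicate point and you handle it correctly: the flip of the admissible perturbation direction at $b$ matches the flip of the sign of $\lambda_a-\lambda_{a+1}$ forced by unimodality. What each approach buys: the paper's argument is shorter and needs nothing beyond the projection inequality, while yours is more self-contained (only one-sided directional optimality is used for \eqref{eq:properties_hlambda_2}) and yields the finer partial-sum structure of the pool-adjacent-violators solution as a by-product; either is acceptable.
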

\begin{proof}
Since $\bm \hlambda$ is the projection of $\bm \mu$ on the closed convex $\cU$ we know that for all $\bm \lambda$ in $\cU$
\begin{equation}
    \sum_{A\in \{1,\ldots,K\}} (\mu_a-\hlambda_a)(\hlambda_a-\lambda_a)\omega_a \geq 0\,.
\label{eq:prop_projection_convex}
\end{equation}
Fix $\bm\lambda\in \cU$ and $-k\leq i\leq l$ and suppose, for example, that $i<0$. The other cases $i=0$ and $i>0$ are similar. Introduce, for $|\epsilon|< \min\big(|c_i-c_{i-1}|,|c_{i+1}-c_{i}|\big)$, the vector $\bm \lambda^\epsilon$ such that 
\[
\lambda^{\epsilon}_a=
\left\{\begin{array}{ll}
c_i-\epsilon & \text{if } a\in A_i\;,\\
\hlambda_a & \text{ else.}
\end{array}\right.
\]
By construction $\bm \lambda^\epsilon\in \cU$ and thanks to \eqref{eq:prop_projection_convex} we have 
\[
\sum_{A\in \{1,\ldots,K\}} (\mu_a-\hlambda_a)(\hlambda_a-\lambda^{\epsilon}_a)\omega_a=\epsilon\sum_{a\in A_i} (\mu_a -\hlambda_a)\omega_a \geq 0\,.
\]
Taking $\epsilon$ positive or negative proves \eqref{eq:properties_hlambda_1}. Let $x,y \in \{1,\ldots,K\}$ be such that $A_i=\{x,x+1,\ldots,y-1,y\}$ and $\bm \lambda'$ be such that 
\[
\lambda'_a=
\left\{\begin{array}{ll}
\lambda_a & \text{if } a\in A_i\;,\\
\lambda_x & \text{if } a < x\;, \\
\lambda_y & \text{if } a > y\;. \\
\end{array}\right.
\]
By construction $\bm \lambda'\in \cU$ and thanks to \eqref{eq:prop_projection_convex} we have 
\begin{align*}
 \sum_{A\in \{1,\ldots,K\}} (\mu_a-\hlambda_a)(\hlambda_a-\lambda_a)\omega_a &=\sum_{a\in A_i} (\mu_a -\hlambda_a)(c_i-\lambda'_a)\omega_a \\+ \lambda_x & \sum_{j<i}\sum_{a\in A_j} (\mu_a -\hlambda_a)\omega_a+\lambda_y\sum_{j>i}\sum_{a\in A_j} (\mu_a -\hlambda_a)\omega_a\\
 &= -\sum_{a\in A_i} (\mu_a -\hlambda_a)\lambda'_a\omega_a\,,
\end{align*}
where we used \eqref{eq:properties_hlambda_1}. Equation \eqref{eq:prop_projection_convex} allows us to prove \eqref{eq:properties_hlambda_2}.
\end{proof}

We now adapt the proof of \citet{hu1997maximum} to the case of unimodal regression.

\begin{proof}[of Lemma~\ref{lem:unimodal_regression_with_constraint}]
Since $\cU_S$ is a closed convex we just need to check that for all $\lambda \in \cU_S$
\begin{equation*}
     \sum_{a\in \{1,\ldots,K\}} \big(\mu_a -\min(\hlambda_a,S)\big)\big(\min(\hlambda_a,S)-\lambda_a\big)\omega_a\geq 0\,. 
\end{equation*}
We have, using the same notation of Lemma~\ref{lem:properties_hlambda}, 
\begin{align*}
\sum_{a\in \{1,\ldots,K\}} \big(\mu_a -\min(\hlambda_a,S)\big)\big(\min(\hlambda_a,S)-\lambda_a\big)\omega_a &=\sum_{i : c_i\leq S}\sum_{a\in A_i} (\mu_a -\hlambda_a)(\hlambda_a-\lambda_a)\omega_a\\
+\sum_{i : c_i> S}\sum_{a\in A_i} (\mu_a -S)(S-\lambda_a)&\omega_a\\
&=\sum_{i : c_i\leq S}\sum_{a\in A_i} (\mu_a -\hlambda_a)(\hlambda_a-\lambda_a)\omega_a\\
+\sum_{i : c_i> S} \sum_{a\in A_i}(\mu_a -\hlambda_a)(S-\lambda_a)\omega_a & +\sum_{i : c_i> S} \sum_{a\in A_i}(c_i
-S)(\lambda_a-S)\omega_a \geq 0\,,
\end{align*}
where we used the Lemma~\ref{lem:properties_hlambda} for the two first sums and the fact that $\lambda_a<S$ for the last sum. 
\end{proof}

\vskip 0.2in
\bibliography{biblio-BLB}

\begin{thebibliography}{18}
\providecommand{\natexlab}[1]{#1}
\providecommand{\url}[1]{\texttt{#1}}
\expandafter\ifx\csname urlstyle\endcsname\relax
  \providecommand{\doi}[1]{doi: #1}\else
  \providecommand{\doi}{doi: \begingroup \urlstyle{rm}\Url}\fi

\bibitem[Antos et~al.(2008)Antos, Grover, and Szepesv{\'a}ri]{antos2008active}
Andr{\'a}s Antos, Varun Grover, and Csaba Szepesv{\'a}ri.
\newblock Active learning in multi-armed bandits.
\newblock In \emph{International Conference on Algorithmic Learning Theory},
  pages 287--302. Springer, 2008.

\bibitem[Barlow et~al.(1973)Barlow, Bartholomew, Bremner, and
  Brunk]{barlow1973statistical}
R.~E. Barlow, D.~J. Bartholomew, J.~M. Bremner, and H.~D. Brunk.
\newblock \emph{Statistical inference under order restrictions}.
\newblock John Wiley and Sons, 1973.

\bibitem[Berge(1963)]{berge1963topological}
Claude Berge.
\newblock \emph{Topological Spaces: including a treatment of multi-valued
  functions, vector spaces, and convexity}.
\newblock Courier Corporation, 1963.

\bibitem[Boyd et~al.(2003)Boyd, Xiao, and Mutapcic]{boyd2003subgradient}
Stephen Boyd, Lin Xiao, and Almir Mutapcic.
\newblock Subgradient methods.
\newblock \emph{Lecture notes of EE392o, Stanford University, Autumn Quarter},
  2004, 2003.

\bibitem[Carpentier and Locatelli(2016)]{carpentier2016tight}
Alexandra Carpentier and Andrea Locatelli.
\newblock Tight (lower) bounds for the fixed budget best arm identification
  bandit problem.
\newblock In \emph{Conference on Learning Theory}, pages 590--604, 2016.

\bibitem[Chen et~al.(2014)Chen, Lin, King, Lyu, and Chen]{chen14comb}
Shouyuan Chen, Tian Lin, Irwin King, Michael~R Lyu, and Wei Chen.
\newblock Combinatorial pure exploration of multi-armed bandits.
\newblock In Z.~Ghahramani, M.~Welling, C.~Cortes, N.~D. Lawrence, and K.~Q.
  Weinberger, editors, \emph{Advances in Neural Information Processing Systems
  27}, pages 379--387. Curran Associates, Inc., 2014.

\bibitem[Fris{\'e}n(1986)]{frisen1986unimodal}
M~Fris{\'e}n.
\newblock Unimodal regression.
\newblock \emph{The Statistician}, pages 479--485, 1986.

\bibitem[Garivier and Kaufmann(2016)]{garivier2016optimal}
Aur{\'e}lien Garivier and Emilie Kaufmann.
\newblock Optimal best arm identification with fixed confidence.
\newblock In \emph{Conference on Learning Theory}, pages 998--1027, 2016.

\bibitem[Geng and Shi(1990)]{geng1990algorithm}
Zhi Geng and Ning-Zhong Shi.
\newblock Algorithm as 257: isotonic regression for umbrella orderings.
\newblock \emph{Journal of the Royal Statistical Society. Series C (Applied
  Statistics)}, 39\penalty0 (3):\penalty0 397--402, 1990.

\bibitem[Genovese et~al.(2013)Genovese, Durez, Richards, Supronik, Dokoupilova,
  Mazurov, Aelion, Lee, Codding, Kellner, Ikawa, Hugot, and Mpofu]{Genovese863}
Mark~C Genovese, Patrick Durez, Hanno~B Richards, Jerzy Supronik, Eva
  Dokoupilova, Vadim Mazurov, Jacob~A Aelion, Sang-Heon Lee, Christine~E
  Codding, Herbert Kellner, Takashi Ikawa, Sophie Hugot, and Shephard Mpofu.
\newblock Efficacy and safety of secukinumab in patients with rheumatoid
  arthritis: a phase ii, dose-finding, double-blind, randomised, placebo
  controlled study.
\newblock \emph{Annals of the Rheumatic Diseases}, 72\penalty0 (6):\penalty0
  863--869, 2013.

\bibitem[Hu(1997)]{hu1997maximum}
Xiaomi Hu.
\newblock Maximum-likelihood estimation under bound restriction and order and
  uniform bound restrictions.
\newblock \emph{Statistics \& probability letters}, 35\penalty0 (2):\penalty0
  165--171, 1997.

\bibitem[Kaufmann et~al.(2016)Kaufmann, Capp{\'e}, and Garivier]{KaCaGa16}
Emilie Kaufmann, Olivier Capp{\'e}, and Aur{\'e}lien Garivier.
\newblock On the complexity of best-arm identification in multi-armed bandit
  models.
\newblock \emph{The Journal of Machine Learning Research}, 17\penalty0
  (1):\penalty0 1--42, 2016.

\bibitem[Le~Tourneau et~al.(2009)Le~Tourneau, Lee, and
  Siu]{LeTourneau09escalation}
Christophe Le~Tourneau, J~Jack Lee, and Lillian~L Siu.
\newblock Dose escalation methods in phase i cancer clinical trials.
\newblock \emph{JNCI: Journal of the National Cancer Institute}, 101\penalty0
  (10):\penalty0 708--720, 2009.

\bibitem[Locatelli et~al.(2016)Locatelli, Gutzeit, and
  Carpentier]{LocatelliGC16}
Andrea Locatelli, Maurilio Gutzeit, and Alexandra Carpentier.
\newblock An optimal algorithm for the thresholding bandit problem.
\newblock In \emph{Proceedings of the 33nd International Conference on Machine
  Learning, {ICML} 2016, New York City, NY, USA, June 19-24, 2016}, pages
  1690--1698, 2016.

\bibitem[Magureanu et~al.(2014)Magureanu, Combes, and
  Prouti{\`{e}}re]{magureanu2014lipschitz}
Stefan Magureanu, Richard Combes, and Alexandre Prouti{\`{e}}re.
\newblock Lipschitz bandits: Regret lower bound and optimal algorithms.
\newblock In \emph{Proceedings of The 27th Conference on Learning Theory,
  {COLT} 2014, Barcelona, Spain, June 13-15, 2014}, pages 975--999, 2014.

\bibitem[Mureika et~al.(1992)Mureika, Turner, and Wollan]{mureika1992algorithm}
RA~Mureika, TR~Turner, and PC~Wollan.
\newblock An algorithm for unimodal isotonic regression, with application to
  locating a maximum, univ. new brunswichk dept. math.
\newblock Technical report, and Stat. Tech. Report 92--4, 1992.

\bibitem[Robertson et~al.(1988)Robertson, Wright, and
  Dykstra]{robertson1988orderrestric}
T.~Robertson, F.~T. Wright, and R.~L. Dykstra.
\newblock \emph{Order restricted statistical inference}.
\newblock John Wiley and Sons, 1988.

\bibitem[Stout(2000)]{stout2000optimal}
Quentin~F Stout.
\newblock Optimal algorithms for unimodal regression.
\newblock \emph{Ann Arbor}, 1001:\penalty0 48109--2122, 2000.

\end{thebibliography}

\end{document}